\documentclass[11pt]{article}
\usepackage{amsmath, amssymb, amsthm, geometry, hyperref}
\usepackage{mathrsfs}
\usepackage{tikz-cd}
\usepackage{graphicx} 

\usepackage[colorinlistoftodos,prependcaption,textsize=tiny]{todonotes}

\newtheorem{thm}{Theorem}[section]
\newtheorem{prop}[thm]{Proposition}
\newtheorem{lem}[thm]{Lemma}
\newtheorem{cor}[thm]{Corollary}
\newtheorem{mainthm}{Theorem}

\theoremstyle{definition}

\newtheorem{rem}[thm]{Remark}
\newtheorem{ex}[thm]{Example}

\newcommand{\abs}[1]{\left\vert#1\right\vert}
\newcommand{\OO}{\mathcal{O}}
\newcommand{\C}{\mathbb{C}}
\newcommand{\PP}{\mathbb{P}}
\newcommand{\Z}{\mathbb{Z}}

\DeclareMathOperator{\sExt}{\mathscr{E}\!xt}

\DeclareMathOperator{\Ext}{Ext}

\DeclareMathOperator{\Pic}{Pic}

\DeclareMathOperator{\Bl}{Bl}

\newcommand{\ev}{\operatorname{ev}}      

\newcommand{\vT}{\mathbb{T}}           

\newcommand{\push}{\pi_*}             

\title{Log Conifold Transitions}
\author{Rodolfo Aguilar}
\date{}
\begin{document}
\maketitle

\begin{abstract}
We define log conifold transitions for Fano threefold pairs of index two and study their deformation theory. Relying on the recent solution to the relative Clemens conjectures in this setting, we construct rational curves with normal bundle $\OO(-1)\oplus \OO(-1)$ by blowing up anchored points on the boundary divisor. Contracting these curves yields a singular space with ordinary double points. We prove that local smoothings of the nodes can be lifted to global first-order deformations, and that the global deformation theory of both the log resolution space and the singular log pair is unconditionally unobstructed. Crucially, the geometry of the boundary del Pezzo surface guarantees this unobstructedness. Furthermore, unlike the classical Calabi-Yau case, the underlying Fano geometry forces the vanishing of global topological balancing conditions, allowing local first-order smoothings of the nodes to be lifted independently.
 As applications, we construct new non-Kähler threefolds via smoothings, we analyze the effective geometry of the smoothed threefolds by determining their Picard groups and proving the persistence of free curves. Finally, we study the Hodge theory of these non-Kähler threefolds.
\end{abstract}

\tableofcontents
\section{Introduction}
The deep connection between Hodge theory and rational curves on quintic threefolds was explored extensively by Clemens \cite{C83, C87}, building upon Griffiths's observations regarding lines and Abel-Jacobi maps \cite{G69}. A classical application of this theory is the conifold transition: a geometric operation in which a collection of disjoint rational curves with normal bundle $\OO(-1)\oplus\OO(-1)$ is contracted to ordinary double points, and the resulting singular threefold is subsequently smoothed \cite{C83b, F86}. As highlighted in the recent survey by Collins \cite{C25}, this construction has an impact well beyond algebraic geometry. For instance, it provides a fundamental mechanism to connect topologically distinct Calabi-Yau threefolds—motivating Reid's fantasy of a universal web—and it plays a central role in differential geometry and string theory by describing topology-changing processes. In this work, we generalize this construction to the logarithmic setting, defining \textit{log conifold transitions} for certain Fano threefold pairs.

We studied curves on log Calabi-Yau threefold pairs in \cite{AGG24b}. Moreover, we showed in \cite{A25} that Hodge theory behaves well for threefold pairs $(X,Y)$ where $K_X+mY\cong \OO_X$. We paid special attention to the half-anticanonical case ($m=2$). In this setting, we established that $H^0(N_{C/X}(-Y))^*\cong H^1(N_{C/X}(-Y))$, which allowed us to formulate the relative Clemens conjectures following \cite{C87}. These conjectures—predicting the finiteness and normal bundles of anchored rational curves—were recently proven by Zahariuc \cite{A26b} for smooth Fano threefolds of index two, utilizing specialization methods \cite{Z18} and moduli irreducibility \cite{LT19}.

With the relative Clemens conjectures resolved, we have a robust supply of rational curves of degree $e$ intersecting $Y$ transversely. By blowing up these intersection points, we obtain rational curves with normal bundle $\OO(-1)\oplus \OO(-1)$. It is important to note that the exceptional $(-1,-1)$ curves are disjoint from the proper transform.
While for low-degrees rational curves, it can be proved easily that the strict transforms are disjoint, along \emph{all this work}, we assume that \emph{the strict transform are disjoint}.
Hence, these can be contracted to yield a singular space $X_0$ with ordinary double points. This gives a new way to construct complex threefolds and degenerations to nodal ones which can not be obtained by elementary methods. We refer to the process of blowing up the anchored intersections, contracting their strict transforms, and smoothing the resulting nodal log pair as a \textit{log conifold transition}.

\subsection{Lifting and Unobstructedness of Log Conifold Transitions}
The primary technical goal of this paper is to study the deformation theory of these singular log pairs. As highlighted by Friedman \cite{F26}, proving smoothability for singular Calabi-Yau and Fano varieties relies heavily on analyzing the compactly supported cohomology of the exceptional locus of the resolution.

Motivated by this philosophy, we study the lifting of local first-order deformations to global ones, and the unobstructedness of global deformations to higher orders. We prove that our log conifold transitions unconditionally satisfy both:
\begin{mainthm} \label{thm:A}
 Let $X$ be a smooth Fano threefold of index two, and let $\sigma:\tilde X\to X$ be the blow-up of $X$ at $e$ generic points $\xi$ on $Y\in \abs{-\frac{1}{2} K_X}$. 
 Let $\pi:\tilde X \to X_0$ be the contraction of a finite number of rational curves with normal bundle $\OO(-1)\oplus \OO(-1)$ arising as the strict transforms of curves passing through $\xi$ transversely. \emph{ We assume these curves are disjoint.}
Then local first-order smoothings of the nodes lift to global first-order deformations, and the global deformations of both the log resolution space and the singular log pair are unconditionally unobstructed: 
\begin{enumerate}
 \item $H^2(\tilde{X}, T_{\tilde X}(-\log \tilde Y))=0$,
 \item $\mathbb{T}_{X_0}^2(-\log Y_0)=0$.
\end{enumerate}
\end{mainthm}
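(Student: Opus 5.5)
Both vanishings will be reduced to cohomology on $\tilde X$. The input is Serre duality together with the log Calabi--Yau type identity on the blow-up. Since $K_X=-2Y$ and $\sigma$ blows up points of $Y$ with exceptional divisors $E_i$, we have $K_{\tilde X}=\sigma^*K_X+2\sum E_i$ and $\tilde Y=\sigma^*Y-\sum E_i$. Hence
\[
K_{\tilde X}+\tilde Y=-\sigma^*Y+\textstyle\sum E_i=-\tilde Y .
\]
Serre duality and $\Omega^1_{\tilde X}(\log\tilde Y)\cong T_{\tilde X}(-\log\tilde Y)^\vee\otimes\OO(K_{\tilde X}+\tilde Y)$ then give
\[
H^2(T_{\tilde X}(-\log\tilde Y))^*\cong H^1(\Omega^1_{\tilde X}(\log \tilde Y)\otimes K_{\tilde X})\cong H^1(\Omega^1_{\tilde X}(\log\tilde Y)(-\tilde Y)\otimes \OO(K_{\tilde X}+\tilde Y)).
\]

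For (1) I would use the residue sequences rather than a direct computation:
\[
0\to T_{\tilde X}(-\log\tilde Y)\to T_{\tilde X}\to N_{\tilde Y/\tilde X}\to 0.
\]
\begin{itemize}
\item $H^2(T_{\tilde X})=0$. This holds for any smooth Fano threefold by Kodaira--Nakano, since $H^2(T_X)=H^1(\Omega^1_X\otimes K_X)^*=0$. It survives the blow-up at points: by the Leray spectral sequence, $\sigma_*T_{\tilde X}=T_X\otimes I_\xi$-type sheaves and $R^1\sigma_*T_{\tilde X}=0$, since $H^1(T_{\PP^2}(-1))=0$ on the exceptional $\PP^2$. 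Hence $H^2(T_{\tilde X})\hookrightarrow H^2(X,\sigma_*T_{\tilde X})$, and the latter is controlled by $H^2(T_X)=0$ because the cokernel $T_X/\sigma_*T_{\tilde X}$ is supported at points.
\item $H^1(N_{\tilde Y/\tilde X})=0$, and in fact the map $H^1(T_{\tilde X})\to H^1(N_{\tilde Y})$ is trivially surjective. Here $\tilde Y\cong Y$ is the del Pezzo surface and $N_{\tilde Y}=\OO_Y(Y)(-\sum p_i)$. Its degree is $Y|_Y=-K_Y$ twisted down by $e$ general points. Since $-K_Y$ is ample, Kodaira gives $H^1(\OO_Y(-K_Y))=0$. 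The $e$ general points impose independent conditions on $H^0(-K_Y)$ when $e\le h^0(-K_Y)=K_Y^2+1$; this is where genericity of $\xi$ and the del Pezzo geometry enter. For larger $e$ I would instead use $H^1(N)\cong H^1(\OO_Y(K_Y+\sum p_i))^*$ and argue directly.
\end{itemize}
Together these force $H^2(T_{\tilde X}(-\log\tilde Y))=0$. The obstacle is precisely the bound on $e$. If independence fails, I would pass to the dual form above and use the exceptional curves to kill the relevant classes.

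For (2), I would use the local-to-global sequence for $\vT^\bullet_{X_0}(-\log Y_0)$. Since the nodes are disjoint from $Y_0$ (the curves meet $\tilde Y$ nowhere after the blow-up, so the proper transforms avoid $\tilde Y$), we have $\mathcal T^1=\bigoplus_{\text{nodes}}\C$ and $\mathcal T^2=0$ for nodes. Hence $\vT^2=H^2(T^0_{X_0}(-\log Y_0))$ fits with $H^0(\mathcal T^1)\to H^2(T^0)\to\vT^2\to H^1(\mathcal T^1)=0$. Following Friedman, $\pi_*T_{\tilde X}(-\log\tilde Y)=T^0_{X_0}(-\log Y_0)$, and $R^1\pi_*T_{\tilde X}=0$ because $H^1(C,\OO(2)\oplus\OO(-1)^2)=0$ on each curve. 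Therefore $H^2(T^0_{X_0}(-\log Y_0))=H^2(\tilde X,T_{\tilde X}(-\log\tilde Y))=0$ by (1), and so $\vT^2=0$.

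Finally, the lifting statement is equivalent to surjectivity of $\vT^1\to H^0(\mathcal T^1)$, which holds because the next term $H^2(T^0_{X_0}(-\log Y_0))$ vanishes. So no balancing condition arises, in contrast with the classical Calabi--Yau case.
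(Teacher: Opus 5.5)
The parts of your proposal other than the proof of (1) are fine and match the paper. This includes the reduction of (2) and of the lifting statement to (1): Leray for $\pi$ with $R^1\pi_*T_{\tilde X}=0$ near the $(-1,-1)$-curves, then the local-to-global sequence with $\mathcal T^2=0$ at nodes. It also includes $H^2(\tilde X,T_{\tilde X})=0$.

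The gap is in (1). You prove it by showing $H^1(\tilde Y,N_{\tilde Y/\tilde X})=0$, and this is false in general. First a small correction: $\tilde Y$ is $\Bl_\xi Y$, not $Y$. Pushing forward along $\tilde Y\to Y$ gives $H^1(\tilde Y,N_{\tilde Y/\tilde X})\cong H^1(Y,-K_Y\otimes I_{\xi/Y})$. This is the cokernel of $H^0(Y,-K_Y)\to \C^e$. For $e$ general points its dimension is $\max(0,e-d-1)$; equivalently, $\chi(-K_{\tilde Y})=d-e+1$ with $h^0=h^2=0$ once $e\ge d+1$. So for $e>d+1$ the group is nonzero. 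Your fallback of passing to a dual form and using exceptional curves to kill classes therefore cannot work.

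This matters because the theorem is stated for arbitrary $e$. The case $e>d+1$ is exactly the one used later, in Proposition \ref{prop:MultDegrees}, where $H^1(N_{Y_0/X_0})\neq 0$ is the whole point. Your argument only covers $e\le d+1$, which is the regime where $\tilde Y$ is a stable submanifold.

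The missing idea is that you only need the map $H^1(\tilde X,T_{\tilde X})\to H^1(\tilde Y,N_{\tilde Y/\tilde X})$ to be surjective, not its target to vanish. Surjectivity comes from deformations of $\tilde X$ that move the blown-up points:
\begin{itemize}
\item We have $H^1(\tilde X,T_{\tilde X})\cong H^1(X,T_X\otimes I_{\xi/X})$, and this receives the coboundary $\partial\colon H^0(\xi,T_X|_\xi)\to H^1(X,T_X\otimes I_{\xi/X})$.
\item Composing $\partial$ with the map to $H^1(\tilde Y,N_{\tilde Y/\tilde X})\cong H^1(Y,N_{Y/X}\otimes I_{\xi/Y})$ gives the composite $H^0(\xi,T_X|_\xi)\to H^0(\xi,N_{Y/X}|_\xi)\xrightarrow{\bar\partial} H^1(Y,N_{Y/X}\otimes I_{\xi/Y})$.
\item The first map is surjective by the normal bundle sequence at the points.
\item $\bar\partial$ is surjective because $H^1(Y,-K_Y)=0$ on the del Pezzo surface.
\end{itemize}
Geometrically, pushing a blown-up point off $Y$ in the normal direction deforms $\tilde X$ in a way that $\tilde Y$ cannot follow, and these directions hit every class in $H^1(N_{\tilde Y/\tilde X})$. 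With this surjectivity, $H^2(\tilde X, T_{\tilde X}(-\log\tilde Y))=0$ holds for all $e$, and the rest of your argument goes through unchanged.
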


Here we find a crucial difference with the classical Calabi-Yau setting. While classical smoothings are obstructed by global homological cycle relations among the contracted curves, our underlying Fano geometry forces the global target space $H^2(\tilde{X}, T_{\tilde X}(-\log \tilde Y))$ to vanish. Consequently, by Friedman's criterion, the local first-order smoothings of the nodes can be lifted globally and independently, free of any topological balancing conditions. Furthermore, because $H^2(\tilde{X}, T_{\tilde X}(-\log \tilde Y))=0$, these global first-order deformations are unconditionally unobstructed and extend to actual smoothings over the disk. The proof is given in Section \ref{s:Un}.

\subsection{Non-K\"ahler Threefolds and the Effective Cone}
As an application of these unobstructed and independent smoothings, we construct novel examples of non-K\"ahler threefolds. The robust supply of anchored rational curves on Fano threefolds grants us significant freedom to choose specific subsets of curves to contract, generating a vast and flexible family of singular spaces. 

By analyzing the global smoothings of these spaces, we uncover a striking geometric property of the general smoothed fiber $X_t$: a total collapse of the effective cone of divisors coexisting with a massive, persistent abundance of free rational curves. The details are in Section \ref{s:examples}. We summarize these geometric consequences as follows:

\begin{mainthm} \label{thm:B}
With the notation as in Theorem \ref{thm:A}:
\begin{enumerate}
\item Let $(X_0, Y_0)$ be a singular pair obtained by contracting $N_e$ anchored curves of degree $e$ as above. If we completely smooth a proper, non-empty subset of the nodes while performing a small resolution on the rest, the surviving exceptional curves are forced into a trivial homology class, yielding a smooth non-K\"ahler threefold.
 \item If we construct $X_0$ by simultaneously contracting suitably chosen curves of degree $e$ and $e-1$, the Picard rank of $X_0$ drops to $1$. If the number of blown-up points satisfies $e > d+1$, the unique generator of the Picard group becomes globally obstructed. The resulting fully smoothed threefold $X_t$ possesses no effective surfaces.
\item Let $X_0$ be as in the point 2 above. Then general very free rational curves on $X$ entirely avoid the conifold locus and deform unobstructedly to the non-K\"ahler fiber $X_t$.
\end{enumerate}
\end{mainthm}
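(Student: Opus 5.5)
We treat the three parts separately, each building on Theorem \ref{thm:A}: local smoothings lift independently and the deformations of $(\tilde X,\tilde Y)$ and $(X_0,Y_0)$ are unobstructed, so the general fibre $X_t$ of any partial or total smoothing exists.

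\textbf{Part 1.} This is the classical Friedman-type homological argument. Let $C_1,\dots,C_N$ be the contracted curves. Smooth a proper non-empty subset $S$ of the nodes and small-resolve the rest. The resulting threefold $X'_t$ contains the surviving exceptional curves $C_j$, $j\notin S$. Each smoothed node contributes a vanishing $3$-cycle $\Gamma_i$. Because the local smoothings lift independently, with no balancing conditions, the natural bookkeeping is a Mayer--Vietoris comparison between $\tilde X$ and $X'_t$ across tubular neighbourhoods of the nodes. In $\tilde X$ there should be a relation $\sum_i a_i[C_i]=0$, coming from the nonexistence of a divisor separating the curves. The step I expect to be the main obstacle is producing this relation with a nonzero coefficient on some $j\notin S$. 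I would try to read it off from the fact that all $C_i$ have the same class $e\ell-\sum_{p\in\xi}E_p$ in $H_2(\tilde X)$, so the differences $[C_i]-[C_j]$ are nonzero in $H_2(\tilde X)$. After smoothing, the $2$-spheres at the smoothed nodes bound $3$-chains, and the relation then becomes $[C_j]=0$ in $H_2(X'_t)$. Since $C_j$ is a nonzero effective curve, a Kähler form would integrate positively on it, which contradicts $[C_j]=0$. Hence $X'_t$ is non-Kähler.

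\textbf{Part 2.} Here I would compute $\Pic(X_0)$ as the set of classes in $\Pic(\tilde X)=\Z H\oplus\bigoplus_p\Z E_p$ that have degree zero on every contracted curve. A curve of degree $e$ through all $e$ points has class $e\ell-\sum_p E_p^\vee$. A suitable curve of degree $e-1$ through $e-1$ of the points, omitting $p_0$, gives another linear condition. Combining enough such conditions, chosen with varying omitted points, cuts the lattice down to rank one.

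The generator $D$ is then computed explicitly, with $d=H^3$. The condition $e>d+1$ should force $h^0(\tilde X,D)=0$, or at least force $D$ to fail to extend. Concretely, the obstruction to extending $D$ to the smoothing is the image of $c_1(D)$ in $H^2(\OO)$, or equivalently its pairing with the vanishing cycles. This pairing is nonzero exactly when the intersection numbers are non-proportional, and I would verify that numerically. Then $\Pic(X_t)$ is torsion, or generated by a class with no sections. Since $H^1(\OO)=H^2(\OO)=0$ persists by semicontinuity, every effective surface would have a nonzero class, which is a contradiction, so $X_t$ has no effective surfaces.

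\textbf{Part 3.} A general very free curve $f:\PP^1\to X$ moves in a family of dimension at least $-K_X\cdot f_*[\PP^1]>3$. By dimension count it therefore misses the finitely many points $\xi$ and the finitely many contracted curves. Its image in $X_0$ lies in the smooth locus, and $H^1(f^*T_{X_0})=0$ because $f^*T$ is ample. Standard deformation theory of maps over the smoothing family, with relative tangent sheaf equal to $T_{X_0}$ near $f$, then shows that $f$ deforms unobstructedly to $X_t$ and stays free.
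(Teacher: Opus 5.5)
Your Part 2 has a genuine gap, because it puts the obstruction in the wrong object.

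\textbf{The generator and its extension.} Carry out your rank computation with the degree-$e$ curve and all $e$ curves of degree $e-1$, one omitting each point. This gives $b_k=a$ for every $k$, so the generator is $D=H-\sum_p E_p=\tilde Y$, the boundary itself. In particular $h^0(\tilde X,D)\geq 1$, so the claim that $e>d+1$ forces $h^0(\tilde X,D)=0$ is false. Nor can $D$ fail to extend. First, $H^2(X_0,\OO_{X_0})=0$ (Fano, blow-up, rational singularities), so every line bundle on $X_0$ extends over the smoothing. Second, $D\cdot C_i=0$ for every contracted curve by the definition of $\Pic(X_0)$, so the pairing you want to show is nonzero vanishes identically. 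Since the vanishing cohomology of a node lives only in degree $3$, $H^2(X_t,\Z)\cong H^2(X_0,\Z)$. Together with $h^1(\OO_{X_t})=h^2(\OO_{X_t})=0$, this makes $\Pic(X_t)$ of rank one, generated by the extension $\mathcal{L}_t$ of $\OO_{X_0}(Y_0)$. It is never torsion, so your closing step (``every effective surface would have a nonzero class, which is a contradiction'') has nothing to contradict.

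\textbf{The missing idea.} The obstruction is to the section, not to the bundle. We have $N_{\tilde Y/\tilde X}\cong \OO_{\tilde Y}(-K_{\tilde Y})$, and $\tilde Y$ is the del Pezzo surface $Y$ blown up at $e$ general points. Hence $h^0(\tilde Y,-K_{\tilde Y})=0$ for $e>d$, and so $h^0(\OO_{X_0}(Y_0))=1$. Riemann--Roch then gives $h^1(Y_0,N_{Y_0/X_0})=e-d-1>0$ for $e>d+1$. This group equals $H^1(X_0,\OO_{X_0}(Y_0))$ and is also the cokernel of $\mathbb{T}^1_{X_0}(-\log Y_0)\to\mathbb{T}^1_{X_0}$. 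So a generic smoothing direction does not carry $Y_0$ along. Any member of $\abs{\mathcal{L}_t}$ would specialize to the unique member $Y_0$ of $\abs{\OO_{X_0}(Y_0)}$, so semicontinuity gives $H^0(X_t,\mathcal{L}_t)=0$. To exclude every surface one must still control $\mathcal{L}_t^{\otimes m}$ for $m\geq 2$, which the paper also treats only briefly.

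\textbf{Parts 1 and 3.} These follow the paper's approach.
In Part 1 there is a slip: the differences $[C_i]-[C_j]$ are \emph{zero} in $H_2(\tilde X)$, not nonzero. That equality is exactly the relation with nonzero coefficient on $j\notin S$ you were looking for, so there is no obstacle. Your Mayer--Vietoris argument that the spheres at smoothed nodes bound then gives $[C_j]=0$, as in the paper, and is if anything more explicit.
In Part 3, avoidance of $\xi$ and of the contracted curves comes from smoothness of the evaluation map on the universal curve, which follows from very freeness. The dimension of the family alone does not give it.
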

These spaces sit in contrast to previously known non-K\"ahler constructions. Unlike classical Clemens-Friedman transitions (which eliminate the second Betti number to yield spaces with $b_2=0$) or Poon's spaces (which rely on small resolutions and thus have $b_3=0$), our systematic smoothings naturally jump the third Betti number ($b_3 > 0$) while preserving a positive second Betti number ($b_2 \ge 1$). 

This yields a novel class of non-K\"ahler threefolds characterized by an absence of surfaces coexisting with an abundance of free rational curves. One may ask if this is related to a twistor space. In Section \ref{s:hodge}, we use recent results of \cite{Che24} to conclude that $H^3(X_t)$ has a polarized Hodge structure, see also \cite{F19,Li24}. Using this and the classical results of Clemens-Griffiths for the intermediate Jacobian of the cubic threefold \cite{CG72}, we can conclude that $X_t$ is not even birational to a twistor-space when we start with $X$ a cubic threefold.

The theoretical framework developed here opens several new avenues for exploring the geometry of Fano pairs. The remarkable abundance of curves with normal bundle $\OO(-1)\oplus \OO(-1)$ constructed via these blow-ups naturally suggests deep connections to the birational geometry and minimal model program of logarithmic threefolds, as well as extensions of classical metric smoothings to the boundary setting. We intend to explore these applications in future work.

The paper is organized as follows. In Section \ref{s:prel}, we gather the necessary preliminaries on half-anticanonical pairs and logarithmic deformation theory. The proofs of the main unobstructedness results (Theorem A) are carried out in Section \ref{s:Un}. In Section \ref{s:examples} is dedicated to explicit geometric constructions, including the classical warm-up of projecting a cubic threefold from a point, and the detailed proofs of the non-K\"ahler properties and curve persistence summarized in Theorem B. Finally, in Section \ref{s:hodge}, we study Hodge theoretic properties of these smoothings and give applications.

\section*{Acknowledgements} 
This note arose after attending the conference Spencer Fest at IMSA-Miami. The author is grateful to the organizers, speakers, and participants, and is especially thankful to Phillip Griffiths and Herb Clemens for useful conversations. The author is deeply indebted to Robert Friedman. His talk surveying recent unobstructedness results for singular Fano and Calabi-Yau varieties directly motivated this work. Furthermore, he provided a careful reading of a first version of this note; his comments helped to clarify the deformation arguments as well as to extend the scope of examples and find more structure on them.
The author is also grateful to Richard Thomas for helpful correspondence that improved the clarity of the exposition.

This work was partially supported by a postdoctoral fellowship, Estancias Posdoctorales por México 2025, from the Secretaría de Ciencia, Humanidades, Tecnología e Innovación (SECIHTI), Mexico and by the project SECIHTI \#CBF-2025-I-673.

\section{Preliminaries}\label{s:prel}
\subsection{1/2 log CY threefolds}\label{ss:12logCY}
Let $X$ be a smooth compact threefold and $Y\in \abs{-\frac{1}{2}K_X}$, this is, such that 
$$ K_X+2Y\cong \mathscr{O}_X .$$ 
We call the pair $(X,Y)$ a $\frac{1}{2}$ log CY threefold.

Recall the following Theorem of \cite{A26b}.

\begin{thm}[Aguilar-Zahariuc]\label{thm:AZ} 
Let $X$ be a smooth prime Fano threefold of index two and degree $d\in \{2,3,4,5,8\}$. Let $Y\in \abs{-\frac{1}{2}K_X}$, $C$ a smooth generic rational curve of degree $e$ and denote by $\xi=C\cap Y$. Then 
\begin{enumerate}
  \item There exists only a finite number $C_1, \ldots, C_{N_e}$ of smooth rational curves of degree $e$ passing through $\xi$ with $N_e>0$.
  \item The normal bundle of each $C_i$ is $N_{C_i/X}\cong \mathscr{O}(e-1)\oplus \mathscr{O}(e-1).$
\end{enumerate}
\end{thm}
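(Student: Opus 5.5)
The plan is to treat both assertions through one object: the evaluation map from the space of degree-$e$ rational curves to $e$-tuples of points on $Y$. Write $-K_X=2H$ with $H^3=d$, so $Y\in\abs{H}$ is a smooth del Pezzo surface and $H\cdot C=e$. Then $-K_X\cdot C=2e$, so $\deg N_{C/X}=2e-2$, and the component $M_e$ of the Hilbert scheme containing smooth rational curves of degree $e$ has expected dimension $-K_X\cdot C=2e$. Consider the incidence map
$$\ev: M_e^\circ \longrightarrow Y^{(e)},\qquad C\mapsto C\cap Y,$$
defined on the open set $M_e^\circ$ of curves meeting $Y$ transversely in $e$ distinct points. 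Both sides have dimension $2e$. Statement 1 will follow once $\ev$ is shown to be generically finite, and hence dominant since $M_e$ is irreducible. Then for $\xi=C\cap Y$ with $C$ generic, the fiber $\ev^{-1}(\xi)$ is finite and nonempty, which gives $N_e>0$.

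The key infinitesimal computation goes as follows. At a point $p\in C\cap Y$ the intersection is transverse, so the projection $T_pY\to N_{C/X}|_p$ is an isomorphism. Hence $d\ev_C$ is identified with the restriction map
$$H^0(C,N_{C/X})\longrightarrow \bigoplus_{p\in\xi} N_{C/X}|_p,$$
and its kernel is $H^0(N_{C/X}(-\xi))=H^0(N_{C/X}\otimes\OO_{\PP^1}(-e))$. Write $N_{C/X}\cong\OO(a)\oplus\OO(b)$ with $a+b=2e-2$. This kernel vanishes if and only if $a,b\le e-1$, that is, if and only if $N_{C/X}\cong\OO(e-1)\oplus\OO(e-1)$. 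In that case $H^1(N_{C/X})=0$ as well, so $M_e$ is smooth of dimension $2e$ at $[C]$ and $\ev$ is étale there. Both statements therefore reduce to one claim: \emph{the general smooth rational curve of degree $e$ has balanced normal bundle}. Moreover, every curve in the fiber over a generic $\xi$ is then also balanced. The reason is that the locus of unbalanced curves is a proper closed subset of the irreducible $M_e$, so its image in $Y^{(e)}$ has dimension $<2e$ and misses the generic $\xi$. This gives statement 2 for all $C_i$, not only for the original $C$.

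To prove the balancedness claim, I would first invoke irreducibility of the moduli of rational curves of each degree on these Fano threefolds (Lehmann--Tanimoto \cite{LT19}). This makes "general" well defined, and it means it suffices to exhibit one degeneration whose smoothing is balanced. I would then argue by induction on $e$ using a specialization argument in the style of \cite{Z18}. The base cases are lines and conics: a general line has $N\cong\OO\oplus\OO$, and a general conic has $N\cong\OO(1)\oplus\OO(1)$, which I would check through the classical description of the Fano surface of lines. For the inductive step, take $C_{e-1}\cup L$, a general balanced curve of degree $e-1$ glued at one point to a general line $L$. Its normal sheaf is the elementary modification of $N_{C_{e-1}}\oplus N_L$ at the node. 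By choosing the tangent direction of $L$ at the node generically, the positive twist lands in a general direction, and upper semicontinuity then forces the smoothing to satisfy $H^0(N(-e))=0$.

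The main obstacle is exactly this inductive step. One must control the direction of the elementary modification so that the restriction to $C_{e-1}$ becomes $\OO(e-1)\oplus\OO(e-1)$ twisted suitably, rather than becoming unbalanced. One must also check that the twisting point is not forced into a special (jumping) subbundle. I would first try the cleaner equivalent formulation: show $H^0(N_{C_0}(-\xi_0))=0$ directly on the nodal curve for a specialized $\xi_0$, then use semicontinuity of $h^0$ in the flat family of twisted normal sheaves. If lines through a general point of $C_{e-1}$ give too little freedom of directions, I would glue a conic instead, or use a chain $C_{e-2}\cup Q$.
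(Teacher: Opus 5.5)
Your reduction is correct. Because $C$ meets $Y$ transversally, $d\ev_{[C]}$ is the restriction $H^0(N_{C/X})\to N_{C/X}|_\xi$. Its kernel $H^0(N_{C/X}(-\xi))$ vanishes exactly when $N_{C/X}\cong\OO(e-1)^{\oplus 2}$. Combined with irreducibility of the space of degree-$e$ rational curves, balancedness of the general curve then gives finiteness, $N_e>0$, and the normal bundle of every $C_i$ over a generic $\xi$. The paper gives no proof of its own: it quotes \cite{A26b}, which rests on irreducibility \cite{LT19} and specialization in the style of \cite{Z18}, so in outline you are on the same route.

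The gap is that the claim everything reduces to is not proved. That claim is that a general smooth rational curve of degree $e$ has balanced normal bundle, and it is the whole content of the theorem. The mechanism you sketch for it does not work as stated.

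\emph{Directions are not free.} Only finitely many lines pass through a general point of $X$, since the family of lines is two-dimensional. So the tangent direction of $L$ at the node cannot be chosen generically.

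\emph{Directions do not decide the vanishing.} Take $C_0=A\cup_p L$ with $N_A\cong\OO(e-2)^{\oplus 2}$ and $N_L\cong\OO^{\oplus 2}$. Elementary modification gives $N_{C_0}|_A\cong\OO(e-2)\oplus\OO(e-1)$ and $N_{C_0}|_L\cong\OO\oplus\OO(1)$ whatever the direction, and $\chi(N_{C_0}(-\xi_0))=0$ for every placement of the $e$ points.
\begin{itemize}
\item If $A$ carries $e-1+k$ of the points with $k\neq 0$, one computes $h^0(N_{C_0}(-\xi_0))=2\abs{k}-1\geq 1$.
\item For the placement $(e-1,1)$ coming from $C_0\cap Y$, $h^0(N_{C_0}(-\xi_0))=0$ if and only if the fibres at $p$ of the positive summands $\OO(e-1)\subset N_{C_0}|_A$ and $\OO(1)\subset N_{C_0}|_L$ are distinct lines in the two-dimensional space $N_{C_0}|_p$.
\end{itemize}
These two lines are not fixed by the tangent directions. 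They depend on the first-order behaviour at $p$ of the maximal-degree subbundles of $N_A$ and $N_L$ through those directions. They can coincide: for two coplanar lines in $\PP^3$ both equal the fibre of $N_{C_0/P}$, and the smoothings are plane conics with unbalanced normal bundle.

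Exhibiting a configuration where the two lines differ, or a different degeneration where $h^0$ drops, is exactly the specialization input of the cited proof. You flag it as the main obstacle but leave it open, so the argument is incomplete at its crux.

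Separately, your uniform treatment breaks at $d=8$, i.e.\ $\PP^3$ with $H=\OO(2)$, which is not prime of index two. There are no curves of odd $H$-degree, so an induction starting from lines cannot begin. For $e=4$, smooth conics are planar with $N\cong\OO(4)\oplus\OO(2)$ and move in a pencil through the four coplanar points $C\cap Y$. So the balancedness claim, and the statement itself, fail there, and that case must be excluded.
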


\begin{lem}\label{lem:BlowHalf}
  Let $X$ be as in the Theorem above. Fix $d\in \{2,3,4,5,8\}$ and $e\in \mathbb{N}$. 
Let $C_1, \ldots, C_{N_e}$ be the $N_e$ rational curves of degree $e$ intersecting $Y$ exactly at $\xi$. 
Let $\sigma: \tilde{X} = \Bl_\xi X \to X$ be the blow-up of $X$ at $\xi$. 
Denote the strict transforms of the rational curves by $\tilde{C}_1, \ldots, \tilde{C}_{N_e}$, that of $Y$ by $\tilde Y$ and the exceptional divisors by $E_1, \ldots, E_{N_e}$. Then 
\begin{enumerate}
  \item The strict transforms have all normal bundle $\OO(-1)\oplus \OO(-1)$, are disjoint from the strict transform $\tilde{Y}$ of $Y$, and \emph{assuming they are disjoint,} by Grauert criterion, they can be contracted.  
  \item The pair $(\tilde X, \tilde Y)$ satisfy $K_{\tilde X}+2\tilde Y \cong \OO_{\tilde X}. $
\end{enumerate}
\end{lem}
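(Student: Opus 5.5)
The plan is to compute directly on the blow-up, using only the standard formulas for blowing up a point on a smooth threefold together with Theorem \ref{thm:AZ}. Throughout, the degree $e$ of a curve is measured against the generator $Y$, so $C\cdot Y=e$. The set $\xi=C\cap Y$ therefore consists of $e$ distinct points $p_1,\dots,p_e$ at which $C$ meets $Y$ transversely; this uses genericity of $C$. Write $E_{p}$ for the exceptional divisor over $p\in\xi$. There are $e$ of these, indexed by the points of $\xi$ rather than by the curves.

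First I will check that every $C_j$ meets $Y$ exactly in $\xi$ and transversely. Since $C_j\cdot Y=e$ and $C_j\supset\xi$ with $\abs{\xi}=e$, each point of $\xi$ contributes intersection multiplicity exactly one. Hence $C_j\cap Y=\xi$, every intersection is transverse, and $C_j$ is smooth at each $p$.

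\textbf{Normal bundle.} For a smooth curve $C$ through points $p_1,\dots,p_e$ blown up in a smooth threefold, the strict transform satisfies
$$N_{\tilde C/\tilde X}\cong N_{C/X}\otimes\OO_C\Big(-\textstyle\sum_i p_i\Big),$$
that is, an elementary transformation twisting by $-1$ at each point. Combined with Theorem \ref{thm:AZ}(2), this gives
$$N_{\tilde C_j/\tilde X}\cong \OO(e-1-e)^{\oplus 2}=\OO(-1)\oplus\OO(-1).$$
As a consistency check, $\deg N_{C_j/X}=-K_X\cdot C_j-2=2e-2$.

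\textbf{Disjointness from $\tilde Y$.} Since $Y$ is smooth at each $p$ (generic points), we have $\tilde Y=\sigma^*Y-\sum_p E_p$, and therefore
$$\tilde Y\cdot\tilde C_j=e-e=0.$$
Intersection number zero alone does not give disjointness, so I will argue pointwise. Away from $\bigcup_p E_p$ the two sets are disjoint because $C_j\cap Y=\xi$. Inside $E_p\cong\PP^2=\PP(T_pX)$, the curve $\tilde C_j$ meets $E_p$ only in the point $[T_pC_j]$, while $\tilde Y\cap E_p$ is the line $\PP(T_pY)$. Transversality says $T_pC_j\not\subset T_pY$, so these are disjoint.

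\textbf{Contraction.} Assuming the $\tilde C_j$ are pairwise disjoint, each is a smooth rational curve with normal bundle $\OO(-1)^{\oplus2}$. By Grauert's criterion (or Laufer's), each contracts analytically to an ordinary double point, giving $\pi:\tilde X\to X_0$. The curves avoid $\tilde Y$, so $Y_0=\pi(\tilde Y)\cong\tilde Y$.

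\textbf{Part 2.} Blowing up a point in a threefold gives $K_{\tilde X}=\sigma^*K_X+2\sum_pE_p$. Combining this with $2\tilde Y=2\sigma^*Y-2\sum_pE_p$ yields
$$K_{\tilde X}+2\tilde Y\cong \sigma^*(K_X+2Y)\cong\OO_{\tilde X}.$$

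The only delicate step is the disjointness from $\tilde Y$. It rests entirely on transversality at $\xi$ for all the $C_j$, not just the chosen generic $C$, and that is exactly what the degree count in the first step supplies.
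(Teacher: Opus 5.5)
Your proposal is correct and follows the paper's route. For Part 1 the paper only says that it follows from Theorem \ref{thm:AZ} together with transversality; you supply the details it leaves implicit:
\begin{itemize}
\item the elementary modification $N_{\tilde C_j/\tilde X}\cong N_{C_j/X}\otimes\OO_{C_j}(-\sum_p p)$;
\item the degree count $C_j\cdot Y=e=\abs{\xi}$, which forces every $C_j$, and not just the chosen generic $C$, to meet $Y$ transversely;
\item the pointwise check inside each $E_p\cong\PP(T_pX)$ that $\tilde C_j$ and $\tilde Y$ are disjoint.
\end{itemize}
Part 2 is the same canonical-class computation as in the paper. You also correctly observe that there are $e$ exceptional divisors, one for each point of $\xi$, and not $N_e$ as the statement writes.
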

\begin{proof}
  The first point follows from Theorem \ref{thm:AZ} and the fact that the curves are transverse to $Y$.
  For the second, we compute:
  $$K_{\tilde X}=\sigma^* K_X+2\sum E_i, \quad \tilde Y=\sigma^* Y-\sum E_i. $$
  Thus, $K_{\tilde X}+2\tilde Y =\sigma^*(K_X+2Y)\cong \OO_{\tilde X}.$
\end{proof}

Denote by $\pi:\tilde{X}\to X_0$ the morphism obtained by contracting a subset of the strict transforms $\tilde{C}_i$ and by $Y_0:=\pi(\tilde{Y})\cong \tilde{Y}$.

The goal of these notes is to study when we can smooth the pair $(X_0,Y_0)$.

\subsection{Deformation of threefolds and pairs}\label{ss:DeformationPairs}
Here we collect some results adapting the deformation theory of \cite{F86} to the logarithmic setting. 

Denote by:
\begin{enumerate}
  \item $X_0$ a compact analytic threefold.
  \item $Z=\{p_1,\ldots, p_j\}$ the singular locus of $X_0$, which consists of isolated Gorenstein singularities admitting a small resolution.
  \item $\pi: \tilde{X}\to X_0$ a small resolution of $X_0$ at all the points in $Z$.
  \item $\Gamma=\pi^{-1}(Z)$ the exceptional locus, which consists of a union of rational curves.
  \item $Y_0\subset X_0$ a smooth divisor such that $Y_0 \cap Z = \emptyset$.
\end{enumerate}

Because the divisor $Y_0$ avoids the singular locus $Z$ (i.e., $Y_0 \cap Z = \emptyset$), we can define the global sheaf of logarithmic K\"ahler differentials $\Omega_{X_0}^1(\log Y_0)$ without requiring generalized cotangent complexes for singular spaces. We do this by defining the sheaf on an open cover, or equivalently, directly on its stalks.

Consider the Zariski open cover of $X_0$ given by $U = X_0 \setminus Z$ (the smooth locus) and $V = X_0 \setminus Y_0$. 
On $U$, the space is smooth and $Y_0 \subset U$ is a smooth divisor, so the classical Deligne sheaf of logarithmic differentials $\Omega_U^1(\log Y_0)$ is well-defined. 
On $V$, there is no boundary divisor, so we utilize the standard sheaf of K\"ahler differentials $\Omega_V^1$. 
On the intersection $U \cap V = X_0 \setminus (Z \cup Y_0)$, the space is smooth and the boundary is empty. Consequently, both local sheaves canonically restrict to the regular cotangent sheaf $\Omega_{U \cap V}^1$. Because they naturally agree on the overlap, they glue to define a unique global coherent sheaf $\Omega_{X_0}^1(\log Y_0)$ on $X_0$.

Equivalently, this sheaf is completely characterized by its stalks at any point $x \in X_0$:
$$
\left( \Omega_{X_0}^1(\log Y_0) \right)_x = 
\begin{cases} 
\Omega_{X_0, x}^1(\log Y_0) & \text{if } x \in Y_0 \text{ (where } X_0 \text{ is guaranteed to be smooth)}, \\
\Omega_{X_0, x}^1 & \text{if } x \notin Y_0 \text{ (which includes all singular points } p_i \in Z).
\end{cases}
$$
By construction, because $\Omega_{X_0}^1(\log Y_0)$ is isomorphic to the standard K\"ahler differentials in an open neighborhood of $Z$, the local deformation sheaves $T_{X_0}^i(-\log Y_0) := \mathcal{E}xt^i(\Omega_{X_0}^1(\log Y_0), \OO_{X_0})$ are unaffected by the boundary $Y_0$ near the singular locus.
Following Friedman's notation, let $T_{X_0}^0(-\log Y_0) = \mathcal{H}om(\Omega_{X_0}^1(\log Y_0), \OO_{X_0})$ be the logarithmic tangent sheaf, and define the local deformation sheaves of the pair as:
\[
T_{X_0}^i(-\log Y_0) := \mathcal{E}xt^i(\Omega_{X_0}^1(\log Y_0), \OO_{X_0}).
\]
Because $\Omega_{X_0}^1(\log Y_0)$ is locally free away from $Z$ (since $Y_0$ is smooth) and isomorphic to $\Omega_{X_0}^1$ away from $Y$, the higher local Ext sheaves are supported strictly on $Z$. Consequently, for $i > 0$, the boundary condition vanishes locally, yielding a canonical isomorphism:
\[
T_{X_0}^i(-\log Y_0) \cong T_{X_0}^i \cong \bigoplus_{k=1}^j T_{p_k}^i
\]
where $T_{p_k}^i$ is the local deformation space of the singularity $p_k$.

The global deformation spaces of the pair, denoted $\mathbb{T}_{X_0}^i(-\log Y_0) = \operatorname{Ext}^i(\Omega_{X_0}^1(\log Y_0), \OO_{X_0})$, are related to the local deformations via the local-to-global spectral sequence:
\[
E_2^{p,q} = H^p(X_0, T_{X_0}^q(-\log Y_0)) \Rightarrow \mathbb{T}_{X_0}^{p+q}(-\log Y_0).
\]
Similarly, the deformations of the log resolution $(\tilde{X}, \tilde{Y})$ are governed by $H^i(\tilde{X}, T_{\tilde{X}}(-\log \tilde{Y}))$. The Leray spectral sequence for the resolution $\pi$ yields:
\[
E_2^{p,q} = H^p(X_0, R^q \pi_* T_{\tilde{X}}(-\log \tilde{Y})) \Rightarrow H^{p+q}(\tilde{X}, T_{\tilde{X}}(-\log \tilde{Y})).
\]
Because $\pi$ is an isomorphism outside $Z$ and $\tilde{Y}$ avoids $\Gamma$, there is a natural identification $\pi_* T_{\tilde{X}}(-\log \tilde{Y}) \cong T_{X_0}(-\log Y)$, allowing us to compare the $E_2$ pages of these spectral sequences directly.

\subsection{Local Cohomology of the Log Tangent Sheaf}
Let $(X_0,0)$ be the germ of an isolated Gorenstein threefold singularity, and $\pi:\tilde{X}\to X_0$ a small resolution. Denote the exceptional locus by 
$$\pi^{-1}(0)=\cup_i C_i=\Gamma,$$
where the $C_i$ are the irreducible curves. 
Recall that by definition of $\Omega_{X_0}^1(\log Y_0)$, it coincides with $\Omega_{X_0}^1$ around a point $0\in X_0$ away from $Y_0$.
\begin{lem}[{\cite[Lemma 3.1]{F86}}]\label{lem:log_tangent_pushforward} $R^0\push T_{\tilde{X}}(-\log \tilde{Y})\cong T^0_{X_0}(-\log Y_0)\cong T_{X_0}^0 .$
\end{lem}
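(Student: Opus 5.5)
The plan is to reduce the statement to Friedman's non-logarithmic result \cite[Lemma 3.1]{F86}, using the fact that we work on a germ. Since $(X_0,0)$ is a germ around a singular point $0\in Z$, and $Y_0\cap Z=\emptyset$, we may shrink $X_0$ to a neighborhood $V$ of $0$ disjoint from $Y_0$. Its preimage $\pi^{-1}(V)$ is then a neighborhood of $\Gamma$ disjoint from $\tilde Y$, because $\tilde Y$ avoids the exceptional locus. On these neighborhoods the logarithmic sheaves coincide with the ordinary ones:
\[
T_{\tilde X}(-\log \tilde Y)|_{\pi^{-1}(V)} = T_{\pi^{-1}(V)}, \qquad T^0_{X_0}(-\log Y_0)|_V = \mathcal{H}om(\Omega^1_V,\OO_V)=T^0_V.
\]
The second equality is the stalkwise description of $\Omega^1_{X_0}(\log Y_0)$ given in Section \ref{ss:DeformationPairs}: away from $Y_0$ it equals $\Omega^1_{X_0}$. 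Since $R^0\pi_*$ is computed locally over the base, this gives $R^0\pi_*T_{\tilde X}(-\log\tilde Y)|_V=\pi_*T_{\pi^{-1}(V)}$. So the second isomorphism in the statement is immediate, and the first reduces to $\pi_*T_{\tilde X}\cong T^0_{X_0}$ for a small resolution of an isolated Gorenstein singularity.

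For this last isomorphism I would reproduce Friedman's argument. Away from $0$, $\pi$ is an isomorphism, so the natural map $\pi_*T_{\tilde X}\to T^0_{X_0}$ is an isomorphism on $V\setminus\{0\}$. The map itself comes from the fact that a vector field upstairs, pushed down, gives a derivation of $\pi_*\OO_{\tilde X}=\OO_{X_0}$; the latter equality holds by normality. Both sheaves are then reflexive, or at least torsion-free with the $S_2$ extension property across the point $0$, which has codimension $3$.

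For $T^0_{X_0}$, reflexivity holds because it is a dual sheaf on a normal space. For $\pi_*T_{\tilde X}$, sections over $V\setminus\{0\}$ correspond to sections of $T_{\tilde X}$ over $\pi^{-1}(V)\setminus\Gamma$. These extend across $\Gamma$ because $\Gamma$ has codimension $2$ in the smooth $\tilde X$ and $T_{\tilde X}$ is locally free (Hartogs). Hence both sheaves equal $j_*$ of their common restriction to $V\setminus\{0\}$, where $j$ is the inclusion, and the map is an isomorphism.

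The only step I expect to need care is this Hartogs extension. Here the smallness of the resolution is essential: if the exceptional locus were a divisor, vector fields would not extend. The remaining point to check is that no global compatibility issue arises when passing from the germ back to the global statement. This is fine because the isomorphism is canonical, and on $U=X_0\setminus Z$ it is the identity, so the local isomorphisms glue.
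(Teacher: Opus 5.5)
Your proposal is correct and follows the paper's route: the paper simply restricts to the germ at $0$ away from $Y_0$, where $\Omega^1_{X_0}(\log Y_0)=\Omega^1_{X_0}$, and invokes Friedman's Lemma 3.1, and your reflexivity/Hartogs argument (which needs the resolution to be small) is a faithful proof of that cited lemma. One caution on your gluing remark: only the first isomorphism globalizes, since on all of $X_0$ there is the exact sequence $0\to T^0_{X_0}(-\log Y_0)\to T^0_{X_0}\to N_{Y_0/X_0}\to 0$, so $T^0_{X_0}(-\log Y_0)\cong T^0_{X_0}$ holds only on the germ, as in the paper's local setting.
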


\begin{lem}[{\cite[Lemma 3.2]{F86}}] \label{lem:log_higher_direct_images}
The local cohomology group $H^2_{\{0\}}(R^1\push T_{\tilde{X}}(-\log \tilde{Y}))=0$
\end{lem}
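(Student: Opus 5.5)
Since the germ $(X_0,0)$ is taken away from $Y_0$, and $\tilde Y$ is disjoint from $\Gamma$, the sheaf $T_{\tilde X}(-\log \tilde Y)$ agrees with $T_{\tilde X}$ on a neighborhood $\pi^{-1}(U)$ of $\Gamma$. The direct images $R^q\push$ are local on $X_0$, so I first reduce to the classical statement $H^2_{\{0\}}(R^1\push T_{\tilde X})=0$ and then follow Friedman's argument for \cite[Lemma 3.2]{F86}.

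\textbf{Setup.} $R^1\push T_{\tilde X}$ is a coherent sheaf supported at $0$, because $\pi$ is an isomorphism off $\Gamma$. For such a sheaf local cohomology is simple: $H^0_{\{0\}}(\mathcal F)=\mathcal F_0$ and $H^i_{\{0\}}(\mathcal F)=0$ for $i>0$. If this is the intended reading, the lemma is immediate: a skyscraper sheaf is flasque, so its higher local cohomology vanishes.

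\textbf{Intended reading.} I suspect the statement really concerns the term arising from the Leray/local cohomology spectral sequence
$$H^p_{\{0\}}(X_0, R^q\push T_{\tilde X}) \Rightarrow H^{p+q}_{\Gamma}(\tilde X, T_{\tilde X}),$$
so the vanishing needed for comparing $E_2$ pages is $H^p_{\{0\}}(R^1\push)=0$ for $p\ge 1$. This again follows from the support statement. I would therefore present the proof as follows:
\begin{enumerate}
\item identify the log and ordinary tangent sheaves near $\Gamma$;
\item show $R^1\push T_{\tilde X}$ is supported on the point $0$, via the theorem on formal functions and the isomorphism off $\Gamma$;
\item conclude that it is flasque on the germ, hence $H^2_{\{0\}}=0$.
\end{enumerate}

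\textbf{Main obstacle.} The only step that needs care is the support statement, in particular coherence and the fact that the stalk at $0$ is finite dimensional. Coherence follows from Grauert's direct image theorem, since $\pi$ is proper. Finiteness follows because $\Gamma$ is a curve, so $R^q\push=0$ for $q\ge2$ and $R^1$ is computed by the theorem on formal functions on the thickenings of $\Gamma$.

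If instead Friedman intends the vanishing of $H^2_\Gamma(\tilde X,T_{\tilde X})$, I would add one more step: use depth $3$ of the locally free sheaf $T_{\tilde X}$ along the codimension-two set $\Gamma$, together with the spectral sequence above, to get $H^2_\Gamma = H^0_{\{0\}}(R^1\push)$ modulo $H^2_{\{0\}}(\push T)$. The latter vanishes because $\push T=T^0_{X_0}$ is reflexive on a normal Gorenstein threefold and has depth at least $2$.
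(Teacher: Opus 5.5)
Your main argument is correct and matches the paper's own reasoning. The paper gives no proof here beyond citing \cite[Lemma 3.2]{F86}. Wherever it uses this fact (the proof of diagram \eqref{eq:friedman_diagram} and Lemma \ref{lem:local_CY_identification}), it argues exactly as you do: $R^1\pi_*T_{\tilde X}(-\log\tilde Y)=R^1\pi_*T_{\tilde X}$ is supported at $0$, hence is a flasque skyscraper with no higher local cohomology. Coherence and the theorem on formal functions are not needed for this, since any sheaf supported at a closed point is a skyscraper. For nodes one even has $R^1\pi_*T_{\tilde X}=0$, as the paper notes in the proof of Proposition \ref{prop:defAbs}.

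You should, however, drop your last paragraph, which is false on several counts.
\begin{itemize}
\item Depth $\ge 2$ of $T^0_{X_0}$ only kills $H^0_{\{0\}}$ and $H^1_{\{0\}}$. By Lemma \ref{lem:Schlessinger}, $H^2_{\{0\}}(T^0_{X_0})\cong H^0(T^1_{X_0})\cong\C$ at a node, so it does not vanish.
\item For a locally free sheaf, local cohomology along the codimension-two curve $\Gamma$ vanishes only in degrees $<2$, not $<3$.
\item The spectral sequence actually gives $H^2_\Gamma(\tilde X,T_{\tilde X})\cong\operatorname{coker}\bigl(H^0(R^1\pi_*T_{\tilde X})\to H^2_{\{0\}}(T^0_{X_0})\bigr)$. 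This is not the quotient you wrote; the roles of the two terms are reversed.
\end{itemize}
Consequently $H^2_\Gamma(\tilde X,T_{\tilde X})\neq 0$. It is precisely the nonzero space that Lemma \ref{lem:local_CY_identification} identifies with the span of the classes $[\tilde C_i]$.
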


\begin{lem}[{\cite[Lemma 3.3]{F86}}]\label{lem:Schlessinger} $H^0(T_{X_0}^1(-\log Y_0))=H^2_{\{0\}}(T_{X_0}^0(-\log Y_0))=H^2_{\{0\}}(T_{X_0}^0) $  
\end{lem}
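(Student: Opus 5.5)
The plan is to reduce the statement to Schlessinger's computation of the deformations of an isolated singularity through local cohomology, working entirely in the germ at $0$. We use only the fact, noted just before Lemma \ref{lem:log_tangent_pushforward}, that near $0$ the sheaf $\Omega^1_{X_0}(\log Y_0)$ coincides with $\Omega^1_{X_0}$. The second equality, $H^2_{\{0\}}(T_{X_0}^0(-\log Y_0))=H^2_{\{0\}}(T_{X_0}^0)$, is then immediate from Lemma \ref{lem:log_tangent_pushforward}: local cohomology supported at $\{0\}$ depends only on the sheaf on an arbitrarily small neighbourhood of $0$, and there $T^0_{X_0}(-\log Y_0)=T^0_{X_0}$. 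The same observation gives $T^1_{X_0}(-\log Y_0)=T^1_{X_0}$ near $0$. We are therefore reduced to the non-logarithmic statement $H^0(T^1_{X_0})\cong H^2_{\{0\}}(T^0_{X_0})$ for an isolated Gorenstein threefold singularity.

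For this, let $U=X_0\setminus\{0\}$, which is smooth, and consider the excision sequence
\[
H^1(X_0,T^0_{X_0})\to H^1(U,T_U)\to H^2_{\{0\}}(T^0_{X_0})\to H^2(X_0,T^0_{X_0}).
\]
Since $X_0$ is a Stein germ, the outer terms vanish, so $H^2_{\{0\}}(T^0_{X_0})\cong H^1(U,T_U)$. It then remains to identify $H^1(U,T_U)$ with $H^0(T^1_{X_0})=\Ext^1(\Omega^1_{X_0},\OO_{X_0})$. This is Schlessinger's theorem, and we use that $0$ has depth $3\ge 3$ on $\OO_{X_0}$, since $X_0$ is Cohen--Macaulay, being Gorenstein.

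To prove the identification, embed $X_0\subset\C^N$ with ideal $I$ and use the conormal sequence $I/I^2\to\Omega^1_{\C^N}|_{X_0}\to\Omega^1_{X_0}\to0$. Dualizing gives $0\to T^0_{X_0}\to \OO_{X_0}^N\to N_{X_0}\to T^1_{X_0}\to0$, where $N_{X_0}=\mathcal{H}om(I/I^2,\OO_{X_0})$. On $U$ this restricts to the short exact sequence $0\to T_U\to\OO_U^N\to N_U\to0$. Taking cohomology on $U$ and using $H^1(U,\OO_U)=H^2_{\{0\}}(\OO_{X_0})=0$ (depth $3$) yields
\[
H^1(U,T_U)=\operatorname{coker}\bigl(H^0(U,\OO_U^N)\to H^0(U,N_U)\bigr).
\]
By depth $\ge2$ we have $H^0(U,\OO_U)=\OO_{X_0}$. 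Moreover $H^0(U,N_U)=N_{X_0}$, since $N_{X_0}$ is a dual of a module and hence reflexive-like, with depth $\ge 2$. The cokernel is therefore exactly $T^1_{X_0}$, which finishes the argument.

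The main obstacle is justifying $H^0(U,N_U)=N_{X_0}$. This amounts to knowing that $\mathcal{H}om(I/I^2,\OO_{X_0})$ has no sections supported at $0$ and that its sections extend over $0$. Both follow because a $\mathcal{H}om$ into a module of depth $\ge2$ again has depth $\ge2$. Alternatively, one can cite Schlessinger directly, as \cite[Lemma 3.3]{F86} does. We will take this route and record the logarithmic reduction above as the only new input.
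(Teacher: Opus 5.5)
Your proposal is correct and follows the paper's approach. The paper gives no independent proof: it cites \cite[Lemma 3.3]{F86} (Schlessinger's theorem for isolated singularities of depth $\ge 3$) and relies on the fact that $\Omega^1_{X_0}(\log Y_0)$ coincides with $\Omega^1_{X_0}$ near the singular point, which is exactly your logarithmic reduction. Your additional sketch of Schlessinger's argument is also sound: excision on a Stein neighbourhood, the dualized conormal sequence restricted to $U=X_0\setminus\{0\}$, and the depth inputs $H^2_{\{0\}}(\OO_{X_0})=0$ and $\operatorname{depth} N_{X_0}\ge 2$.
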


\subsection{The Local-to-Global Spectral Sequences}
Let us now come back to the global setting and use the notation of subsection \ref{ss:DeformationPairs}.

\begin{prop}
  The Leray spectral sequence for the log resolution $H^p(R^q \push T_{\tilde{X}}(-\log \tilde{Y}))\Rightarrow H^{p+q}(T_{\tilde{X}}(-\log \tilde{Y}))$ and the local-to-global spectral sequence for the singular pair $\mathbb{T}_{X_0}^i(-\log Y_0)$ induce the following commutative diagram with exact rows:
\begin{equation} \label{eq:friedman_diagram}
\resizebox{\displaywidth}{!}{%
\begin{tikzcd}[column sep=1.5em, row sep=3em, ampersand replacement=\&]
0 \arrow[r] \& H^1( T^0_{X_0}(-\log Y_0)) \arrow[r] \arrow[d, "\cong"] \& H^1(T_{\tilde{X}}(-\log \tilde{Y})) \arrow[r] \arrow[d, "\phi"] \& H^0(R^1\push T_{\tilde{X}}(-\log \tilde{Y})) \arrow[r, "ob"] \arrow[d] \& H^2(T^0_{\tilde{X}}(-\log \tilde{Y})) \arrow[r] \arrow[d, "\cong"] \& H^2(T_{\tilde{X}}(-\log \tilde{Y})) \arrow[d] \arrow[r] \& 0 \\
0 \arrow[r] \& H^1( T^0_{X_0}(-\log Y_0)) \arrow[r] \& \vT^1_{X_0}(-\log Y_0) \arrow[r] \& H^0(X_0, T^1_{X_0}(-\log Y_0)) \arrow[r, "ob"] \& H^2(T^0_{X_0}(-\log Y_0)) \arrow[r] \& \vT^2_{X_0}(-\log Y_0) \arrow[r] \& 0
\end{tikzcd}%
}
\end{equation}
\end{prop}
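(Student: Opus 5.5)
The plan is to produce both rows as five-term exact sequences of low-degree terms of the two spectral sequences, and then to compare them through a natural map between the spectral sequences.

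\textbf{Top row.} First consider the Leray spectral sequence $E_2^{p,q}=H^p(X_0,R^q\push T_{\tilde X}(-\log\tilde Y))$. Its rows are controlled as follows. Since $\pi$ is an isomorphism off $Z$ and the fibres have dimension at most one, $R^q\push T_{\tilde X}(-\log \tilde Y)$ vanishes for $q\ge 2$. For $q=1$ this sheaf is supported on the finite set $Z$, so $H^p(R^1\push)=0$ for $p>0$. Hence the only possibly nonzero rows are $q=0,1$, and $E_2^{p,1}=0$ for $p\geq 1$. The sequence is then a two-row spectral sequence with the $q=1$ row concentrated in $p=0$, and the standard exact sequence of such a spectral sequence reads
\[
0\to E_2^{1,0}\to H^1\to E_2^{0,1}\xrightarrow{d_2}E_2^{2,0}\to H^2\to E_2^{1,1}=0 .
\]
Using Lemma \ref{lem:log_tangent_pushforward}, we identify $E_2^{p,0}=H^p(T^0_{X_0}(-\log Y_0))$. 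This gives the top row, including the surjectivity onto $H^2(T_{\tilde X}(-\log\tilde Y))$.

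\textbf{Bottom row.} The same argument applies to $E_2^{p,q}=H^p(T^q_{X_0}(-\log Y_0))\Rightarrow \vT^{p+q}$. Here $T^1$ and $T^2$ are supported on $Z$, so $E_2^{p,q}=0$ for $q\ge1$ and $p\ge1$. The low-degree exact sequence is
\[
0\to E_2^{1,0}\to\vT^1\to E_2^{0,1}\xrightarrow{d_2}E_2^{2,0}\to\vT^2\to E_2^{0,2}.
\]
For ordinary double points (hypersurface singularities) we have $T^2_{p_k}=0$. For a general isolated Gorenstein singularity admitting a small resolution, I would instead argue that the image of $\vT^2$ in $H^0(T^2)$ is not needed. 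The statement as drawn ends in $0$, so I will use the node case, where $T^2=0$; the paper's application concerns nodes. Then $\vT^2\to 0$ and the bottom row is exact.

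\textbf{Comparison map and commutativity.} This is the main step. I would construct a morphism of spectral sequences from the local-to-global sequence of $X_0$ to the Leray sequence. The natural map is the pullback $\pi^*\Omega^1_{X_0}(\log Y_0)\to\Omega^1_{\tilde X}(\log\tilde Y)$. This yields a map $\mathrm{Ext}^i_{X_0}(\Omega^1_{X_0}(\log Y_0),\OO_{X_0})\to \mathrm{Ext}^i_{\tilde X}(\pi^*\Omega^1_{X_0}(\log Y_0),\OO_{\tilde X})$, which then receives $H^i(T_{\tilde X}(-\log\tilde Y))$. Arrows in this direction go upward, however, whereas the diagram has vertical arrows going down.

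Friedman's approach \cite{F86} instead uses the Grothendieck spectral sequence for $R\mathrm{Hom}(\Omega^1_{X_0},R\pi_*\,\cdot)$, together with the local map $R^1\pi_*T_{\tilde X}\to T^1_{X_0}$. Via Lemma \ref{lem:Schlessinger}, this local map is $H^0(R^1\push)\cong H^1(\tilde U\setminus\Gamma, T)\cong H^2_{\{0\}}(T^0)\cong H^0(T^1)$, with Lemma \ref{lem:log_higher_direct_images} giving injectivity of the relevant local-cohomology maps. I would follow exactly this construction:
\begin{itemize}
\item define $\phi$ by the composite $H^1(\tilde X,T)\to H^1(\tilde X\setminus\Gamma,T)=H^1(X_0\setminus Z,T^0_{X_0})$, and note that $\vT^1$ embeds there as well via depth considerations;
\item define the third vertical map by the local Schlessinger identification;
\item take identities on the $H^p(T^0)$ columns.
\end{itemize}
Commutativity then follows from naturality of the Mayer–Vietoris / local cohomology long exact sequence
\[
H^1_Z\to H^1(X_0)\to H^1(X_0\setminus Z)\to H^2_Z\to H^2(X_0),
\]
into which both rows map compatibly. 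In particular, both connecting maps $ob$ factor through $H^2_Z(T^0)\to H^2(T^0)$.

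The main obstacle is verifying that the $d_2$ differentials are compatible with this identification. The boundary causes no extra work, because $Y_0\cap Z=\emptyset$: every local computation takes place where the log sheaves equal the ordinary ones.
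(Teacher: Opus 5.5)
\textbf{The rows.} Your derivation of the two rows is the paper's argument: five-term sequences, $R^{\geq 2}\pi_*=0$, skyscraper supports, and $T^2=0$ at nodes. You also correctly read the fourth top entry as $E_2^{2,0}=H^2(T^0_{X_0}(-\log Y_0))$.

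\textbf{The vertical arrows.} The paper never constructs these, and your attempt to supply them contains a false step. The first isomorphism in $H^0(R^1\pi_*T_{\tilde X})\cong H^1(\tilde U\setminus\Gamma,T)\cong H^2_{\{0\}}(T^0)\cong H^0(T^1)$ fails. For a curve $C$ with $N_C\cong\OO(-1)\oplus\OO(-1)$ one has $T_{\tilde X}|_C\cong\OO(2)\oplus\OO(-1)^{\oplus 2}$ and $\mathrm{Sym}^kN_C^\vee\cong\OO(k)^{\oplus(k+1)}$. The theorem on formal functions therefore gives $R^1\pi_*T_{\tilde X}(-\log\tilde Y)=0$, as is also used in Proposition \ref{prop:defAbs}. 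On the other hand $T^1_p\cong\C$ at each node. Only the restriction map $H^1(\tilde U,T)\to H^1(\tilde U\setminus\Gamma,T)$ exists, and here it is $0\to\C$; this is exactly why the smoothing directions do not come from $\tilde X$.

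Your alternative definition of $\phi$ through $H^1(X_0\setminus Z,T^0)$ has two further problems. It needs more than an embedding of $\vT^1_{X_0}(-\log Y_0)$ into that group; it is in fact an isomorphism, by Lemma \ref{lem:Schlessinger} and $H^1_Z(T^0)=0$. More seriously, you flag but never prove the compatibility of $d_2$ with the local-cohomology coboundary, so commutativity is not established.

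\textbf{How to close the gap.} The direction problem you raise is not real, and resolving it also removes the $d_2$ issue. Nodes are rational, so $R\pi_*\OO_{\tilde X}\cong\OO_{X_0}$. Combining the natural map $L\pi^*\Omega^1_{X_0}(\log Y_0)\to\Omega^1_{\tilde X}(\log\tilde Y)$ with adjunction gives a morphism in the derived category
\[
K:=R\pi_*T_{\tilde X}(-\log\tilde Y)\longrightarrow R\pi_*R\mathcal{H}om\bigl(L\pi^*\Omega^1_{X_0}(\log Y_0),\OO_{\tilde X}\bigr)\cong R\mathcal{H}om\bigl(\Omega^1_{X_0}(\log Y_0),\OO_{X_0}\bigr)=:K',
\]
which points from $\tilde X$ down to $X_0$.

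The Leray and local-to-global spectral sequences are the hypercohomology spectral sequences $E_2^{p,q}=H^p(X_0,\mathcal{H}^q(\cdot))$ of $K$ and $K'$. These are functorial, so $K\to K'$ induces a morphism of spectral sequences. Hence the five-term sequences form a commutative ladder with no separate check on $d_2$. On $\mathcal{H}^0$ the morphism is the isomorphism of Lemma \ref{lem:log_tangent_pushforward}, which gives the two isomorphisms on the $H^p(T^0)$ terms. On $\mathcal{H}^1$ it gives the third vertical arrow $R^1\pi_*T_{\tilde X}(-\log\tilde Y)\to T^1_{X_0}(-\log Y_0)$.
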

\begin{proof}
The top row is the standard five-term exact sequence of low degrees associated to the Leray spectral sequence $E_2^{p,q} = H^p(X_0, R^q\push T_{\tilde{X}}(-\log \tilde{Y})) \Rightarrow H^{p+q}(\tilde{X}, T_{\tilde{X}}(-\log \tilde{Y}))$. Because $R^2\push = 0$, the $E_2^{0,2}$ term vanishes, allowing the sequence to terminate exactly at $H^2(\tilde{X}, T_{\tilde{X}}(-\log \tilde{Y}))$.

The bottom row is the analogous five-term exact sequence for the local-to-global spectral sequence $E_2^{p,q} = H^p(X_0, T^q_{X_0}(-\log Y_0)) \Rightarrow \vT^{p+q}_{X_0}(-\log Y_0)$. Because ordinary double points are local complete intersections, $T^2_{X_0} = 0$, allowing the sequence to terminate at $\vT^2_{X_0}(-\log Y_0)$. 

Furthermore, because the singularities are isolated points and the resolution is small, both $R^1\push T_{\tilde{X}}(-\log \tilde{Y})$ and $T^1_{X_0}(-\log Y_0)$ are skyscraper sheaves supported entirely on the zero-dimensional locus $Z$. Consequently, their first cohomology groups vanish 
($H^1(X_0, R^1\push T_{\tilde{X}}(-\log \tilde{Y}))=0$ and $H^1(X_0, T^1_{X_0}(-\log Y_0))=0$), which forces the maps originating from the degree-two base spaces to be surjective.
\end{proof}

\section{Unobstructedness}\label{s:Un}
\subsection{Setting and basic sequences}\label{ss:setting}
Let us fix the setting and notation for this section. Let $X$ be a smooth prime Fano threefold of index $2$ and degree $d\in \{2,3,4,5,8\}$.
Let $Y\in \abs{-\frac{1}{2} K_X}$ be a generic smooth divisor and $\xi\subset Y$ a generic subset of $e$ points.
Denote by $\sigma:\tilde{X}\to X$ the blow-up of $X$ at $\xi$ and by $\tilde{Y}$ the strict transform of $Y$, by $\Gamma=\cup \tilde{C}_i\subset \tilde X\setminus \tilde Y$ a union of smooth rational curves assumed \emph{disjoint} with normal bundle $N_{\tilde C_i/\tilde X}\cong \OO(-1)\oplus \OO(-1)$.
Let $\pi:\tilde{X}\to X_0$ be the contraction of $\Gamma$ with $\pi_*(\Gamma)=Z$ and $Y_0\cong \pi_*(\tilde Y)$.

For deformations of the threefold $X_0$ we follow the notation of \cite{F86}. 
As for deformations of the pair $(X_0,Y_0)$ we use the notation and definitions of section \ref{ss:DeformationPairs}. 

There is the exact sequence
$$0\to T_{X_0}^0(-\log Y_0)\to T_{X_0}^0\to N_{Y_0/X_0}\to 0. $$
As $N_{Y_0/X_0}=K_{Y_0}^{-1}$, $H^2(Y_0, N_{Y_0/X_0})=H^2(Y_0, K_{Y_0}^{-1})$ is Serre dual to $H^0(Y_0, K_{Y_0}^2)=0$. 
So there is an exact sequence
\begin{equation}\label{eq:LEStanToNor}
  H^1(X_0, T_{X_0}^0(-\log Y_0))\to H^1(X_0, T_{X_0}^0)\to H^1(Y_0, N_{Y_0/X_0})\to H^2(X_0, T_{X_0}^0(-\log Y_0))\to 0. 
\end{equation}
The local-to-global $\Ext$ spectral sequence gives:
{\small 
\begin{equation} \label{eq:locToGlo} 
\begin{gathered}
\begin{tikzcd}[column sep=0.5em] 
\Ext^1(\Omega_{X_0}^1(\log Y_0), \OO_{X_0}) \ar[d, "\cong "] \ar[r] & H^0(\sExt^1(\Omega_{X_0}^1(\log Y_0), \OO_{X_0}))\ar[d, "\cong "] \ar[r]  & H^2(X_0, T_{X_0}^0(-\log Y_0)) \ar[r] & \Ext^2(\Omega_{X_0}^1(\log Y_0), \OO_{X_0}) \ar[r] \ar[d, "\cong"] & 0 \\
\mathbb{T}_{X_0}^1(-\log Y_0)  &  H^0(T_{X_0}^1) &      & \mathbb{T}_{X_0}^2(-\log Y_0) & 
\end{tikzcd}
\end{gathered}
\end{equation}
}
Similarly, the Poincaré residue exact sequence 
$$0\to \Omega_{X_0}^1\to \Omega_{X_0}^1(\log Y_0)\to \OO_{Y_0}\to 0, $$
gives the long exact $\Ext$ sequence
\begin{equation}\label{eq:DefPairToDef}
  \begin{tikzcd}
  \Ext^1(\Omega_{X_0}^1(\log Y_0),\OO_{X_0}) \ar[d, "\cong "] \ar[r] & \Ext^1(\Omega_{X_0}^1,\OO_{X_0}) \ar[d, "\cong"] \ar[r] & \Ext^2(\OO_{Y_0}, \OO_{X_0})\ar[d, "\cong"] \ar[r] & 0\\
  \mathbb{T}_{X_0}^1(-\log Y_0) & \mathbb{T}_{X_0}^1 & H^1(Y_0,N_{Y_0/X_0}) &
\end{tikzcd}.
\end{equation}
Here $\Ext^2(\OO_{Y_0}, \OO_{X_0})\cong H^1(\sExt^1(\OO_{Y_0}, \OO_{X_0}))\cong H^1(\OO_{X_0}(Y_0)/\OO_{X_0})\cong H^1(Y_0, N_{Y_0/X_0}).$
\subsection{Deformations of the threefold}

\begin{prop}\label{prop:defAbs} The deformations of $X_0$ and of $\tilde X$ are unobstructed, this is, 
  \begin{enumerate}
  \item $\mathbb{T}_{X_0}^2=0$,
  \item $H^2(X_0,T_{X_0}^0)\cong H^2(\tilde X, T_{\tilde X})=0$.
  \end{enumerate} 
  As a consequence, the map $\mathbb{T}_{X_0}^1\to H^0(X_0, T_{X_0}^1)$ is surjective.
\end{prop}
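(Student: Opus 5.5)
The plan is to prove $H^2(\tilde X, T_{\tilde X})=0$ first, transfer this vanishing to $X_0$ through the Leray spectral sequence of $\pi$, and then read off $\mathbb{T}^2_{X_0}=0$ and the surjectivity from the non-logarithmic version of diagram \eqref{eq:friedman_diagram}.

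\textbf{Vanishing on $\tilde X$.} By Serre duality,
$$H^2(\tilde X, T_{\tilde X})^* \cong H^1(\tilde X, \Omega^1_{\tilde X}\otimes K_{\tilde X}).$$
By Lemma \ref{lem:BlowHalf}, $K_{\tilde X}\cong \OO_{\tilde X}(-2\tilde Y)$, so it suffices to show $H^1(\tilde X, \Omega^1_{\tilde X}(-2\tilde Y))=0$. The divisor $\tilde Y=\sigma^*Y-\sum E_i$ is not ample, so Kodaira--Nakano cannot be applied directly. I would argue on $X$ and then correct for the blow-up.

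\textbf{Step 1: the case of $X$.} Since $X$ is Fano of index two, $-K_X=2Y$ with $Y$ ample, and $\Omega^1_X\otimes K_X=\Omega^1_X(-2Y)$. Akizuki--Nakano gives
$$H^q(X,\Omega^p_X\otimes L^{-1})=0 \quad\text{for } p+q<3,\ L \text{ ample}.$$
With $p=q=1$ this yields $H^1(X,\Omega^1_X(-2Y))=0$. Equivalently $H^2(X,T_X)=0$, which is the standard unobstructedness of Fano manifolds.

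\textbf{Step 2: the blow-up.} Blowing up points does not create obstructions. For a point blow-up $\sigma$ one has $\sigma_*T_{\tilde X}=T_X\otimes I$, where $I$ is the ideal sheaf of the first-order points, i.e. vector fields vanishing at $\xi$. One also has $R^q\sigma_*T_{\tilde X}=0$ for $q>0$: the exceptional divisors are $\PP^2$'s and $T_{\tilde X}|_{E}$ has vanishing higher cohomology, and the relative vanishing then follows from the theorem on formal functions. Hence
$$H^2(T_{\tilde X})=H^2(X,T_X\otimes I_\xi).$$
The sequence $0\to T_X\otimes I_\xi\to T_X\to T_X|_\xi\to 0$ has a skyscraper cokernel, so $H^2(T_X\otimes I_\xi)$ is a quotient of $H^1$ of a skyscraper, which is zero, followed by $H^2(T_X)=0$. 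Therefore $H^2(\tilde X,T_{\tilde X})=0$.

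I expect the main obstacle to be justifying $R^1\sigma_*T_{\tilde X}=0$. The fallback is the sequence
$$0\to T_{\tilde X}\to \sigma^*T_X\to \bigoplus_i j_*T_{E_i}(-1)\to 0,$$
together with $R^q\sigma_*\OO_{\tilde X}=0$ and the vanishing of $H^q(\PP^2,T_{\PP^2}(-1))$ for $q\geq 1$.

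\textbf{Step 3: comparing $X_0$ and $\tilde X$.} Since $\Gamma$ consists of $(-1,-1)$-curves, $R^1\pi_*T_{\tilde X}$ is a skyscraper on $Z$ and $R^2\pi_*=0$. The Leray sequence therefore has
$$H^2(X_0,R^0\pi_*T_{\tilde X}) \to H^2(T_{\tilde X})$$
with kernel the image of $H^0(R^1\pi_*)$ under the map $ob$. We have $R^0\pi_*T_{\tilde X}=T^0_{X_0}$ by Lemma \ref{lem:log_tangent_pushforward} in its non-log form. To get the isomorphism $H^2(X_0,T^0_{X_0})\cong H^2(\tilde X,T_{\tilde X})$ I must show $ob=0$ on the top row.

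For this I would use Friedman's argument. The map $ob$ factors through local cohomology:
$$H^0(R^1\pi_*)\to H^2_Z(T^0_{X_0})\to H^2(T^0_{X_0}).$$
By Lemma \ref{lem:log_higher_direct_images} and the comparison of $H^1(\tilde U\setminus\Gamma)$ with $H^1(\tilde U)$, the image of $H^0(R^1\pi_*)$ in $H^2_Z(T^0_{X_0})\cong H^0(T^1_{X_0})$ (Lemma \ref{lem:Schlessinger}) is zero. Hence $ob=0$. Consequently
$$H^2(X_0,T^0_{X_0})\cong H^2(\tilde X,T_{\tilde X})=0.$$

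\textbf{Step 4: conclusions.} Since ODPs are lci, $T^2_{X_0}=0$. The bottom row of the non-log analogue of \eqref{eq:friedman_diagram} then gives a surjection $H^2(T^0_{X_0})\to \mathbb{T}^2_{X_0}$, so $\mathbb{T}^2_{X_0}=0$. In the same exact row, the map $H^0(T^1_{X_0})\to H^2(T^0_{X_0})=0$ is zero, so $\mathbb{T}^1_{X_0}\to H^0(T^1_{X_0})$ is surjective.
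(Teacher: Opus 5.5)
Your Steps 1, 2 and 4 are correct and agree with the paper's argument. The paper also computes $H^2(\tilde X,T_{\tilde X})\cong H^2(X,\sigma_*T_{\tilde X})\cong H^2(X,T_X)=0$, using the skyscraper cokernel of $\sigma_*T_{\tilde X}\subset T_X$ and Nakano vanishing. Your formal-functions (or Euler-sequence) proof of $R^q\sigma_*T_{\tilde X}=0$ fills in what the paper leaves implicit.

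\textbf{The gap is in Step 3.} Since you only know that $R^1\pi_*T_{\tilde X}$ is a skyscraper, Leray gives only a surjection $H^2(X_0,T^0_{X_0})\twoheadrightarrow H^2(\tilde X,T_{\tilde X})$ with kernel $\operatorname{im}(ob)$. So Step 4 depends entirely on $ob=0$, and your argument for this does not work.
\begin{itemize}
\item Lemma~\ref{lem:log_higher_direct_images} holds automatically for any sheaf supported on $Z$, so it carries no information.
\item The comparison of $H^1(\tilde U\setminus\Gamma)$ with $H^1(\tilde U)$ actually points the other way. Take a Stein neighbourhood $U$ of $Z$. Then $H^0(R^1\pi_*T_{\tilde X})=H^1(\tilde U,T_{\tilde X})$. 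The restriction to $H^1(\tilde U\setminus\Gamma,T_{\tilde X})$ is \emph{injective}, because $H^1_\Gamma(\tilde U,T_{\tilde X})=0$ ($T_{\tilde X}$ is locally free and $\Gamma$ has codimension $2$). Moreover $H^1(U\setminus Z,T^0_{X_0})\cong H^2_Z(U,T^0_{X_0})$ because $U$ is Stein.
\end{itemize}
Hence $H^0(R^1\pi_*T_{\tilde X})\to H^2_Z(T^0_{X_0})$ is injective, and its image is zero exactly when $R^1\pi_*T_{\tilde X}=0$. Your argument never uses the $(-1,-1)$ normal bundle. It would equally show that this image vanishes for small resolutions of Reid's pagodas ($N_C\cong\OO\oplus\OO(-2)$). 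There $R^1\pi_*T_{\tilde X}\neq 0$ and the map is injective, so that conclusion is false. The argument therefore cannot be valid as stated.

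\textbf{The missing input} is exactly what the paper uses: for ordinary double points, $R^1\pi_*T_{\tilde X}=R^2\pi_*T_{\tilde X}=0$. You can prove this with the same formal-functions argument as in your Step 2. For each component $C\cong\PP^1$ of $\Gamma$:
\begin{itemize}
\item $T_{\tilde X}|_C\cong\OO(2)\oplus\OO(-1)^{\oplus 2}$;
\item $I_C^k/I_C^{k+1}\cong\mathrm{Sym}^k N^\vee_{C/\tilde X}\cong\OO(k)^{\oplus(k+1)}$;
\item hence $H^1(C,T_{\tilde X}|_C\otimes I_C^k/I_C^{k+1})=0$ for all $k\geq 0$.
\end{itemize}
Then $E_2^{0,1}=0$, there is no differential to kill, and Leray gives $H^2(X_0,T^0_{X_0})\cong H^2(\tilde X,T_{\tilde X})=0$ directly. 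With this replacement, your Step 4 completes the proof exactly as in the paper.
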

\begin{proof}
  We have the following exact sequence given by the local-to-global spectral sequence:
  $$\mathbb{T}_{X_0}^1\to H^0(X_0,T_{X_0}^1)\to H^2(X_0, T_{X_0}^0)\to \mathbb{T}_{X_0}^2\to 0. $$
  Because $Z$ consists of ordinary double points, we have that $\push T_{\tilde X}\to T_{X_0}^0$ is an isomorphism and $R^1\push T_{\tilde X}=R^2\push T_{\tilde X}=0.$
  Thus, from the Leray spectral sequence, we have that $H^2(X_0,T_{X_0}^0)\cong H^2(\tilde X, T_{\tilde X})$. 

  Also, we have that $R^1\sigma_* T_{\tilde X}=R^2\sigma_* T_{\tilde X}=0$ and, from the natural map $\sigma_* T_{\tilde X}\to T_X$, there is an exact sequence:
  $$0\to \sigma_* T_{\tilde X}\to T_X\to \mathcal{S}\to 0, $$
  where $S$ is a skyscraper sheaf supported at the blowup points $\xi$. In particular, via Leray and the vanishing of $H^i(X,\mathcal{S})$ for $i>0$, 
  $$H^2(X_0, T_{X_0}^0)\cong H^2(\tilde X, T_{\tilde X})\cong H^2(X, \sigma_* T_{\tilde X})\cong H^2(X, T_X)=0, $$
  since $X$ is Fano and via Nakano vanishing. So finally $\mathbb{T}_{X_0}^2=0$ and the map $\mathbb{T}_{X_0}^1\to H^0(X_0, T_{X_0}^1)$ is surjective. This surjectivity guarantees that any local first-order smoothing lifts to a global first-order deformation. Combined with the vanishing of the obstruction space $\mathbb{T}_{X_0}^2(-\log Y_0) = 0$, these deformations are unconditionally unobstructed and extend to higher orders.
\end{proof}

\subsection{Deformations of the pair: del Pezzo boundary}
\begin{prop}
  Assume that $Y_0$ is a del Pezzo surface. Hence
  $$\Ext^2(\Omega_{X_0}^1(\log Y_0),\OO_{X_0})\cong \mathbb{T}_{X_0}^2(-\log Y_0)=0. $$
\end{prop}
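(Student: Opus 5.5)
From the local-to-global sequence \eqref{eq:locToGlo}, the map $H^2(X_0,T^0_{X_0}(-\log Y_0))\to\mathbb{T}^2_{X_0}(-\log Y_0)$ is surjective. So it is enough to show that $H^2(X_0,T^0_{X_0}(-\log Y_0))=0$. By the exact sequence \eqref{eq:LEStanToNor}, this group is the cokernel of the restriction map
$$H^1(X_0,T^0_{X_0})\to H^1(Y_0,N_{Y_0/X_0}).$$
The plan is therefore to show that the target $H^1(Y_0,N_{Y_0/X_0})$ already vanishes. This is exactly the place where the del Pezzo hypothesis on $Y_0$ enters.

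First I would identify the normal bundle. As noted above, adjunction gives $N_{Y_0/X_0}\cong K_{Y_0}^{-1}$. This uses that $K_{X_0}+2Y_0$ is trivial near $Y_0$, which follows from Lemma \ref{lem:BlowHalf} together with $Y_0\cong\tilde Y$ lying away from $\Gamma$: near $\tilde Y$ we have $K_{\tilde Y}=(K_{\tilde X}+\tilde Y)|_{\tilde Y}=-\tilde Y|_{\tilde Y}$.

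Next I would compute $H^1(Y_0,K_{Y_0}^{-1})$. Serre duality identifies it with $H^1(Y_0,K_{Y_0}^{2})^*$. Since $Y_0$ is del Pezzo, we can write $K_{Y_0}^{-1}=K_{Y_0}+(-2K_{Y_0})$ with $-2K_{Y_0}$ ample. Kodaira vanishing then gives $H^i(Y_0,K_{Y_0}^{-1})=0$ for all $i>0$.

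With the target of the restriction map equal to zero, \eqref{eq:LEStanToNor} forces $H^2(X_0,T^0_{X_0}(-\log Y_0))=0$. The surjection from \eqref{eq:locToGlo} then gives $\mathbb{T}^2_{X_0}(-\log Y_0)=0$.

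As a cross-check, the same argument on $\tilde X$ gives the vanishing of $H^2(\tilde X,T_{\tilde X}(-\log\tilde Y))$. One uses $0\to T_{\tilde X}(-\log\tilde Y)\to T_{\tilde X}\to N_{\tilde Y/\tilde X}\to 0$, the vanishing $H^2(\tilde X,T_{\tilde X})=0$ from Proposition \ref{prop:defAbs}, and the vanishings $H^1$ and $H^2$ of $N_{\tilde Y/\tilde X}\cong K_{\tilde Y}^{-1}$.

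The main obstacle I expect is justifying the del Pezzo property and the adjunction identification in this setting. One point is that $\tilde Y$ is the blow-up of $Y$ at the $e$ points $\xi$, and is del Pezzo only when $e$ is small relative to $d=K_Y^2$, namely $e<d$. This is precisely why the statement assumes it. The other point is the exactness of \eqref{eq:LEStanToNor} at $H^2$. This needs the vanishing $H^2(Y_0,N_{Y_0/X_0})=0$ already established above, and the fact that the restriction $T^0_{X_0}\to N_{Y_0/X_0}$ is surjective. The latter holds because $Y_0$ is smooth and avoids $Z$, so everything happens in the smooth locus.
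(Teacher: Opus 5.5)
Your proposal is correct and matches the paper's argument: Kodaira vanishing gives $H^1(Y_0,N_{Y_0/X_0})=H^1(Y_0,K_{Y_0}^{-1})=0$, so \eqref{eq:LEStanToNor} yields $H^2(X_0,T^0_{X_0}(-\log Y_0))=0$, and the surjection in \eqref{eq:locToGlo} then gives $\mathbb{T}^2_{X_0}(-\log Y_0)=0$. One small correction to your last paragraph: surjectivity of $H^1(Y_0,N_{Y_0/X_0})\to H^2(X_0,T^0_{X_0}(-\log Y_0))$ comes from $H^2(X_0,T^0_{X_0})=0$ (Proposition~\ref{prop:defAbs}), not from $H^2(Y_0,N_{Y_0/X_0})=0$, which plays no role in this step.
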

\begin{proof}
  It follows from 
  $$H^1(Y_0, N_{Y_0/X_0})=H^2(Y_0, N_{Y_0/X_0})=0 $$
  by using Kodaira vanishing.

  We conclude the proof by using the exact sequences (\ref{eq:LEStanToNor}) in subsection \ref{ss:setting}, which imply that $H^2(X_0, T_{X_0}^0(-\log Y_0))=0$, from which the result follows easily.
\end{proof}

Using \ref{eq:locToGlo} and \ref{eq:DefPairToDef} we obtain the following Corollary.

\begin{cor}
  The natural map $\mathbb{T}_{X_0}^1(-\log Y_0)\to H^0(T_{X_0}^1)$ is surjective, and the deformations of the pair $(X_0,Y_0)$ are unobstructed.
  
  Moreover, $\mathbb{T}_{X_0}^1(-\log Y_0)\to \mathbb{T}_{X_0}^1$ is surjective and the morphism from the deformation functor of the pair $(X_0,Y_0)$ to the deformation functor of $X_0$ is smooth.
\end{cor}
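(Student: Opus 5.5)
We will read off the corollary from the exact sequences (\ref{eq:locToGlo}) and (\ref{eq:DefPairToDef}), using the two vanishings already proven: $H^2(X_0, T^0_{X_0}(-\log Y_0))=0$ and $\mathbb{T}^2_{X_0}(-\log Y_0)=0$ from the del Pezzo proposition, together with $H^1(Y_0,N_{Y_0/X_0})=0$.

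\textbf{Surjectivity onto local deformations.} The sequence (\ref{eq:locToGlo}) reads
$$\mathbb{T}^1_{X_0}(-\log Y_0)\to H^0(T^1_{X_0})\to H^2(X_0,T^0_{X_0}(-\log Y_0)).$$
Here we use the canonical identification $T^1_{X_0}(-\log Y_0)\cong T^1_{X_0}$, valid because the higher Ext sheaves are supported on $Z$ and $Y_0\cap Z=\emptyset$. The third term vanishes by the previous proposition, so the first map is surjective. Unobstructedness of the pair is then the statement $\mathbb{T}^2_{X_0}(-\log Y_0)=0$, which that proposition gives. We will note, as in Proposition \ref{prop:defAbs}, that a vanishing obstruction space makes the deformation functor of the pair smooth, i.e.\ its hull is a smooth germ. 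This is the standard $T^1$/$T^2$ formalism for pairs whose singularities avoid the boundary, and it is the one point where we rely on general theory rather than computation.

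\textbf{Comparison with deformations of $X_0$.} The sequence (\ref{eq:DefPairToDef}) gives
$$\mathbb{T}^1_{X_0}(-\log Y_0)\to\mathbb{T}^1_{X_0}\to H^1(Y_0,N_{Y_0/X_0})\cong\Ext^2(\OO_{Y_0},\OO_{X_0}).$$
Since $N_{Y_0/X_0}\cong K_{Y_0}^{-1}$ is ample on the del Pezzo surface $Y_0$, Kodaira vanishing gives $H^1(Y_0,K_{Y_0}^{-1})=0$. Hence the forgetful map $\mathbb{T}^1_{X_0}(-\log Y_0)\to\mathbb{T}^1_{X_0}$ is surjective.

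\textbf{Smoothness of the forgetful morphism.} For a morphism of deformation functors with hulls, smoothness follows from two conditions. The first is surjectivity on tangent spaces, established above. The second is injectivity on obstruction spaces, which holds trivially since the source obstruction space $\mathbb{T}^2_{X_0}(-\log Y_0)$ is $0$. We will also cite Proposition \ref{prop:defAbs} ($\mathbb{T}^2_{X_0}=0$): both functors are then smooth, so a surjection on tangent spaces already implies smoothness of the morphism.

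The main subtlety is justifying that $\mathbb{T}^i_{X_0}(-\log Y_0)=\Ext^i(\Omega^1_{X_0}(\log Y_0),\OO_{X_0})$ really are the tangent and obstruction spaces of the pair functor. We would argue this locally: near $Z$ the pair functor agrees with deformations of $X_0$, and near $Y_0$ it agrees with log-smooth deformations. The two descriptions then glue by the usual Čech argument, as in \cite{F86}.
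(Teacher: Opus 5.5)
Your proposal is correct and is essentially the paper's argument. The paper obtains the corollary directly from the sequences (\ref{eq:locToGlo}) and (\ref{eq:DefPairToDef}), using $H^2(X_0,T^0_{X_0}(-\log Y_0))=0$ and $H^1(Y_0,N_{Y_0/X_0})=0$ from the del Pezzo proposition, exactly as you do; your smoothness criterion (surjectivity on $\mathbb{T}^1$, with injectivity on obstruction spaces automatic since $\mathbb{T}^2_{X_0}(-\log Y_0)=0$) and your remark on identifying $\mathbb{T}^i_{X_0}(-\log Y_0)$ with the pair functor's tangent and obstruction spaces only make explicit what the paper leaves implicit.
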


\begin{rem}
  In general, we don't expect $Y_0$ to be a stable submanifold of $X_0$. For example, a deformation of $\tilde X$ and hence $X_0$ allows for moving the points of $\xi$ arbitrarily. 
  But once $e$ is large enough, there won't be a $Y\in \abs{-\frac{1}{2} K_X}$ passing through $\xi$. This is reflected in the fact that, by Riemann-Roch, $\xi(Y_0, K_{Y_0}^{-1})=c_1(K_{Y_0})^2+1$, so that $h^1(Y_0, K_{Y_0}^{-1})\not = 0$ as soon as $c_1(K_{Y_0})^2\leq -2$.
\end{rem}

\begin{rem}
If we blow-up less points than the degree of $X$, this is $e<d$, we can show that $H^0(-K_{\tilde {X}})$ is non-empty, basepoint free, its generic element $S$ is smooth, disjoint from $\Gamma$ and by Kawamata-Viehweg $H^1(S,N_{S/\tilde X})=0$. Hence the log-Calabi-Yau pair $(X_0,S)$ is also unobstructed.
\end{rem}

\subsection{Deformations of the pair: general case}
\begin{thm}\label{thm:unobstructedness}
  Let $(X_0,Y_0)$ be as in subsection \ref{ss:setting}. Then
  \begin{enumerate}
    \item $\mathbb{T}_{X_0}^2(-\log Y_0)=0$,
    \item the map $\mathbb{T}_{X_0}^1(-\log Y_0)\to H^0(T_{X_0}^1)$ is surjective,
    \item $H^2(\tilde{X}, T_{\tilde X}(-\log \tilde Y))=0.$
  \end{enumerate}
   
\end{thm}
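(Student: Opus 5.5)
The plan is to reduce everything to a vanishing on the smooth Fano $X$ itself, where the boundary $Y$ is a genuine del Pezzo surface of degree $d$. The blown-up boundary $\tilde Y \cong Y_0$ need not be del Pezzo, but $Y$ is. The argument proves (3) first and deduces (1) and (2) from it. It has three steps: (a) identify $H^2(X_0,T^0_{X_0}(-\log Y_0))$ with $H^2(\tilde X, T_{\tilde X}(-\log\tilde Y))$; (b) identify the latter with $H^2(X, T_X(-\log Y))$; (c) kill this group using the del Pezzo proposition applied to $(X,Y)$.

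\emph{Step (a): passing from $X_0$ to $\tilde X$.} Since $\Gamma\cap\tilde Y=\emptyset$, in a neighbourhood of each $\tilde C_i$ we have $T_{\tilde X}(-\log\tilde Y)=T_{\tilde X}$. The direct images $R^q\push$ are local over $X_0$, so they agree with those of $T_{\tilde X}$ near $Z$ and with the identity elsewhere. For $(-1,-1)$-curves, the proof of Proposition \ref{prop:defAbs} records that $R^1\push T_{\tilde X}=R^2\push T_{\tilde X}=0$. Hence $R^q\push T_{\tilde X}(-\log\tilde Y)=0$ for $q>0$. Combined with Lemma \ref{lem:log_tangent_pushforward}, the Leray spectral sequence then degenerates and gives
\[
H^2(X_0,T^0_{X_0}(-\log Y_0))\cong H^2(\tilde X,T_{\tilde X}(-\log\tilde Y)).
\]
In particular, the map $ob$ in the top row of \eqref{eq:friedman_diagram} is zero.

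\emph{Step (b): passing from $\tilde X$ to $X$.} I mimic the argument of Proposition \ref{prop:defAbs} for $\sigma$, in its logarithmic version. A vector field on $X$ tangent to $Y$ lifts to $\Bl_\xi X$ (tangent to $\tilde Y$) precisely when it vanishes at the points of $\xi$. So $\sigma_*T_{\tilde X}(-\log\tilde Y)\subset T_X(-\log Y)$, and the cokernel is a skyscraper sheaf supported on $\xi$.

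I also need $R^q\sigma_*T_{\tilde X}(-\log\tilde Y)=0$ for $q>0$. This is the step I expect to be the main technical point. I would check it locally at a point of $Y$ blown up, using coordinates $(x,y,z)$ with $Y=\{z=0\}$. One route is the sequence
\[
0\to T_{\tilde X}(-\log\tilde Y)\to T_{\tilde X}\to N_{\tilde Y/\tilde X}\to 0 .
\]
Here $R^q\sigma_*T_{\tilde X}=0$ (as in Proposition \ref{prop:defAbs}). Moreover $R^q\sigma_*N_{\tilde Y/\tilde X}=R^q(\sigma|_{\tilde Y})_*\OO_{\tilde Y}(\tilde Y)$, and $\OO_{\tilde Y}(\tilde Y)$ is the pullback of $\OO_Y(Y)$ twisted by $-E$, which has no higher direct images on a surface point blow-up. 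It then remains to check that $\sigma_*T_{\tilde X}\to\sigma_*N_{\tilde Y/\tilde X}$ is surjective; this is a local computation, since vector fields vanishing at a point can realize any normal section vanishing there. Granting this, Leray and the vanishing of $H^{>0}$ of skyscraper sheaves give
\[
H^2(\tilde X,T_{\tilde X}(-\log\tilde Y))\cong H^2(X,T_X(-\log Y)).
\]

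\emph{Step (c) and conclusion.} The group $H^2(X,T_X(-\log Y))$ is caught between $H^1(Y,N_{Y/X})$ and $H^2(X,T_X)$. Here $N_{Y/X}=K_Y^{-1}$ with $Y$ del Pezzo, so $H^1(Y,N_{Y/X})=0$ by Kodaira vanishing. Also $H^2(X,T_X)=0$ by Nakano vanishing, as in Proposition \ref{prop:defAbs}. This gives (3), and by step (a) also $H^2(X_0,T^0_{X_0}(-\log Y_0))=0$. Feeding this into \eqref{eq:locToGlo} immediately yields $\mathbb{T}^2_{X_0}(-\log Y_0)=0$, which is (1). The same sequence shows that $\mathbb{T}^1_{X_0}(-\log Y_0)\to H^0(T^1_{X_0})$ is surjective, which is (2).
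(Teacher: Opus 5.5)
Your proof is correct, but it takes a different route from the paper's at the key step. Your step (a), passing from $X_0$ to $\tilde X$ via $R^{>0}\push T_{\tilde X}=0$ for $(-1,-1)$-curves, is the same reduction the paper makes.

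\textbf{The paper's route.} It stays on $\tilde X$. After killing $H^2(\tilde X,T_{\tilde X})$, it must show that $H^1(\tilde X,T_{\tilde X})\to H^1(\tilde Y,N_{\tilde Y/\tilde X})$ is onto. This target is genuinely nonzero once $e>d+1$. The paper does this with a commutative square of coboundary maps. Moving the points of $\xi$ inside $X$ surjects onto $H^0(\xi,N_{Y/X}\otimes\OO_\xi)$. Then $\bar\partial$ maps onto $H^1(Y,N_{Y/X}\otimes I_{\xi/Y})$ because $H^1(Y,N_{Y/X})=0$.

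\textbf{Your route.} You push the log tangent sheaf itself down along $\sigma$. You show $R^{>0}\sigma_*T_{\tilde X}(-\log\tilde Y)=0$. You also show that $\sigma_*T_{\tilde X}(-\log\tilde Y)\subset T_X(-\log Y)$ has skyscraper cokernel; it is in fact $T_Y\otimes\OO_\xi$. This reduces everything to the del Pezzo vanishing for the original pair $(X,Y)$. The step you flagged as the main technical point is immediate in coordinates: $\sigma_*T_{\tilde X}=\mathfrak{m}_\xi T_X$, and $a\partial_x+b\partial_y+c\partial_z\mapsto c|_Y$ with $c\in\mathfrak{m}_\xi$ hits all of $N_{Y/X}\otimes I_{\xi/Y}$. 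So nothing is missing.

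\textbf{What each buys.} Your argument shows conceptually that blowing up points on the boundary never changes the obstruction space: $H^2(\tilde X,T_{\tilde X}(-\log\tilde Y))\cong H^2(X,T_X(-\log Y))$, independently of $e$ and of any Fano hypothesis. It also avoids relying on the commutativity of the coboundary square, which the paper asserts without proof. The paper's argument instead exhibits which first-order deformations of $\tilde X$ fill up $H^1(\tilde Y,N_{\tilde Y/\tilde X})$, namely moving the blown-up points. That information is what the subsequent corollary on stability of $\tilde Y$ for $e\le d+1$ and the remark on non-stability exploit.
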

\begin{proof}
By using the Leray spectral sequence as in the proof of Proposition \ref{prop:defAbs} and by diagram (\ref{eq:friedman_diagram}), it suffices to show that $H^2(\tilde{X}, T_{\tilde X}(-\log \tilde Y))=0.$

By considering part of the long exact sequence associated to
$$0\to T_{\tilde X}(-\log \tilde Y)\to T_{\tilde X}\to N_{\tilde Y/ \tilde X}\to 0,$$
and using Proposition \ref{prop:defAbs}, the vanishing $H^2(\tilde{X}, T_{\tilde X}(-\log \tilde Y))=0$ is equivalent to show that 
$$H^1(\tilde X, T_{\tilde X})\to H^1(\tilde Y, N_{\tilde Y/ \tilde X}) $$
is surjective, because $$H^2(\tilde X, T_{\tilde X})\cong H^2(X, \sigma_* T_{\tilde X})\cong H^2(T_X)=0$$
since $X$ is Fano and via Nakano vanishing. 

Denote the ideal sheaf on $X$ (respectively on Y) of the points $\xi$ by $I_{\xi/X}$ (resp. by $I_{\xi/Y}$).
We have isomorphisms
$$H^1(\tilde X, T_{\tilde X})\cong H^1(X, \sigma_* T_{\tilde X})\cong H^1(X, T_X\otimes I_{\xi/X}). $$
using this and the coboundary map $\partial$ given by the following sequence:
$$0\to T_X\otimes I_{\xi/X}\to T_X\to N_{\xi/X}\to 0, $$
we obtain $\partial: H^0(\xi, N_{\xi/X})\to H^1(\tilde X, T_{\tilde X})$. Its image consists of the tangents to deformations on $\tilde X$ arising from keeping $X$ fixed but deforming the blowup points $\xi$.

Let $\bar{\sigma}:=\sigma\vert_{\tilde Y}$ and let $\bar{E}_i =E_i\cap \tilde{Y}$ be the restrictions of the exceptional divisors $E_i$.
Since $N_{\tilde Y/\tilde X}\cong \bar{\sigma}^* N_{Y/X}\otimes \sum_i \OO_{\tilde Y}(-\bar E_i)$, we have $\bar{\sigma}_* N_{\tilde{Y}/\tilde{X}}=N_{Y/X}\otimes I_{\xi/Y}$, and $R^i \bar{\sigma}_* N_{\tilde Y/\tilde X}=0$ for $i>0$.
Twisting the ideal sheaf sequence defining $\xi$ inside $Y$, we obtain a coboundary map:
$$\bar{\partial}:H^0(\xi, N_{Y/X}\otimes \OO_{\xi})\to H^1(Y, N_{Y/X}\otimes I_{\xi/Y})=H^1(Y, \bar{\sigma}_* N_{\tilde Y/\tilde X})=H^1(\tilde Y, N_{\tilde Y/ \tilde X}). $$
In the present case, $H^1(Y, N_{Y/X})=0$ since $Y$ is a del Pezzo surface and $N_{Y/X}=K_{Y}^{-1}$.
Now, we have a commutative diagram 
$$\begin{tikzcd}
  H^0(\xi,N_{\xi/X}) \ar[r] \ar[d, "\partial"] & H^0(\xi, N_{Y/X}\otimes \OO_\xi) \ar[d, "\bar{\partial} "] \\
  H^1(\tilde X, T_{\tilde X}) \ar[r] & H^1(\tilde Y, N_{\tilde Y/\tilde X})
\end{tikzcd}. $$
The top horizontal map comes from the sequence of normal bundles 
$$0\to N_{\xi/Y}\to N_{\xi/X}\vert_{\xi} \to N_{Y/X}\otimes \OO_{\xi}\to 0.$$ 
This sequence consists of skyscraper sheaves supported on $\xi$. Then $H^0(\xi, N_{\xi/X})\to H^0(\xi, N_{Y/X}\otimes \OO_{\xi})$ is surjective and $\bar{\partial}$ is surjective, so $H^1(\tilde X, T_{\tilde X})\to H^1(\tilde Y, N_{\tilde Y/\tilde X})$ is surjective.
\end{proof}

\begin{cor}
  $\tilde Y$ is a stable submanifold of $\tilde X$ if $e\leq d+1$.
\end{cor}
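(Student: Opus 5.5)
The plan is to apply Kodaira's stability theorem for compact complex submanifolds: if $H^1(\tilde Y, N_{\tilde Y/\tilde X})=0$, then every small deformation of $\tilde X$ carries a deformation of $\tilde Y$. So it suffices to prove this vanishing when $e\le d+1$. We will in fact get more, namely that the deformations of $\tilde Y$ inside each fibre form a smooth family, since $H^0(N_{\tilde Y/\tilde X})$ then has the expected dimension.

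\emph{Reduction to $Y$.} As in the proof of Theorem \ref{thm:unobstructedness}, $N_{\tilde Y/\tilde X}\cong \bar\sigma^*N_{Y/X}\otimes\OO_{\tilde Y}(-\sum_i\bar E_i)$. Moreover $\bar\sigma_*N_{\tilde Y/\tilde X}=N_{Y/X}\otimes I_{\xi/Y}$ and the higher direct images vanish. Hence
$$H^1(\tilde Y,N_{\tilde Y/\tilde X})\cong H^1(Y,K_Y^{-1}\otimes I_{\xi/Y}),$$
using $N_{Y/X}=K_Y^{-1}$ by adjunction and $K_X=-2Y$. Twisting the ideal sheaf sequence of $\xi\subset Y$ by $K_Y^{-1}$ and using $H^1(Y,K_Y^{-1})=0$ (Kodaira vanishing, as $Y$ is del Pezzo) gives
$$H^0(Y,K_Y^{-1})\xrightarrow{\ \ev\ } H^0(\xi,\OO_\xi)\cong\C^e\to H^1(Y,K_Y^{-1}\otimes I_{\xi/Y})\to 0.$$
So the required vanishing is equivalent to the surjectivity of $\ev$, i.e. to the $e$ points of $\xi$ imposing independent conditions on $\abs{-K_Y}$.

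\emph{Dimension count.} Since $Y\in\abs{-\frac12K_X}$ on an index-two Fano threefold of degree $d$, the surface $Y$ is a del Pezzo surface of degree $K_Y^2=H^3=d$. By Riemann--Roch and Kodaira vanishing, $h^0(Y,K_Y^{-1})=\chi(K_Y^{-1})=K_Y^2+1=d+1$. This matches the Euler characteristic computation $\chi(K_Y^{-1}\otimes I_{\xi/Y})=d+1-e$ in the preceding remark.

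\emph{Genericity step.} This is the only step that really uses the hypothesis. For $e\le d+1$ and $\xi$ generic, we will show that $\ev$ is surjective by induction on the number of points. Suppose $p_1,\dots,p_k$ (with $k<e\le d+1$) impose independent conditions. Then the subspace $V_k\subset H^0(K_Y^{-1})$ of sections vanishing at them has dimension $d+1-k\ge 1$. Choose a nonzero $s\in V_k$ and pick $p_{k+1}$ off the curve $\{s=0\}$. Then $p_{k+1}$ imposes a new condition, so $\dim V_{k+1}=d-k$. The set of admissible $(e)$-tuples is therefore a nonempty Zariski open subset of $Y^e$, which is what "generic $\xi$" means in the setting of subsection \ref{ss:setting}.

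I expect the main point needing care to be that this genericity is compatible with the other genericity conditions already imposed on $\xi$: transversality, finiteness of the anchored curves $C_i$, and disjointness of the $\tilde C_i$. Since all of these are open conditions (or hold on a dense set) in the irreducible variety $Y^e$, their intersection is still nonempty, and the corollary follows from Kodaira's theorem.
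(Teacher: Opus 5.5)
Your proposal is correct and follows the paper's argument. Like the paper, you identify $H^1(\tilde Y,N_{\tilde Y/\tilde X})$ with the cokernel of the restriction $H^0(Y,K_Y^{-1})\to H^0(\xi,K_Y^{-1}\otimes\OO_\xi)$ and compare the dimensions $d+1$ and $e$; you also make explicit two steps the paper leaves implicit, namely the induction showing that $e\le d+1$ generic points impose independent conditions on $\abs{-K_Y}$, and the appeal to Kodaira's stability theorem to pass from $H^1(\tilde Y,N_{\tilde Y/\tilde X})=0$ to stability.
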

\begin{proof}
  The argument of the previous Theorem shows that $H^1(\tilde Y, N_{\tilde Y/\tilde X})=0$ if and only if the restriction map $H^0(Y, N_{Y/X})\to H^0(\xi, N_{Y/X}\otimes \OO_\xi)$ is surjective. 
  We have that $H^0(Y, N_{Y/X})=\mathbb{C}^{d+1}$ and  $H^0(\xi, N_{Y/X}\otimes \OO_\xi)=\mathbb{C}^e$
\end{proof}

To understand the geometric nature of the local deformations we are lifting, we utilize the local cohomology supported on the exceptional locus $\Gamma = \pi^{-1}(Z)$. 

\begin{lem} \label{lem:local_CY_identification}
There is a canonical surjection from the local deformation space of the nodes to the vector space spanned by the fundamental classes of the contracted curves:
\[
H^0(X_0, T^1_{X_0}(-\log Y_0)) \twoheadrightarrow H^2_\Gamma(\tilde{X}, \Omega^2_{\tilde{X}}).
\]
\end{lem}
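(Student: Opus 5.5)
The plan is to follow Friedman's argument in \cite{F86}, which runs through local cohomology on a neighbourhood of a single node. By Lemma \ref{lem:Schlessinger}, $H^0(X_0,T^1_{X_0}(-\log Y_0))$ splits as a direct sum over the nodes $p_k\in Z$ of the groups $H^2_{\{p_k\}}(T^0_{X_0})$, and the log structure plays no role near $Z$. It therefore suffices to work near one node $p$ with exceptional curve $C=\pi^{-1}(p)$, a smooth rational curve with normal bundle $\OO(-1)\oplus\OO(-1)$, and then sum over the components of $\Gamma$.

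\textbf{Step 1: compare local cohomology upstairs and downstairs.} Use the spectral sequence
\[
H^p_{\{p\}}\bigl(R^q\push T_{\tilde X}\bigr)\Rightarrow H^{p+q}_C\bigl(\tilde X,T_{\tilde X}\bigr).
\]
Its inputs are $\push T_{\tilde X}=T^0_{X_0}$ (Lemma \ref{lem:log_tangent_pushforward}) and the vanishing of $H^2_{\{0\}}(R^1\push T_{\tilde X})$ (Lemma \ref{lem:log_higher_direct_images}). For a $(-1,-1)$ curve one even has $R^1\push T_{\tilde X}=0$. Together these give a map
\[
H^2_{\{p\}}(T^0_{X_0})\to H^2_C(\tilde X,T_{\tilde X}),
\]
and the task is to show it is surjective, ideally an isomorphism in the $(-1,-1)$ case.

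\textbf{Step 2: pass from $T_{\tilde X}$ to $\Omega^2_{\tilde X}$.} Contraction with a nowhere vanishing holomorphic $3$-form $\omega$ gives $T_{\tilde X}\cong\Omega^2_{\tilde X}\otimes K_{\tilde X}^{-1}$. The form $\omega$ exists on a small neighbourhood of $C$, because the node is Gorenstein and $K_{\tilde X}|_C\cong\OO_C$ for a $(-1,-1)$ curve. Local cohomology supported on $C$ depends only on such a neighbourhood, so $\omega$ induces an isomorphism
\[
H^2_C(\tilde X,T_{\tilde X})\cong H^2_C(\tilde X,\Omega^2_{\tilde X}).
\]

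\textbf{Step 3: identify $H^2_C(\Omega^2)$ with the span of the fundamental class.} Since $C$ has codimension $2$, $H^2_C(\Omega^2_{\tilde X})$ contains the Grothendieck fundamental class $[C]$. Excision and the formal-function description of local cohomology express $H^2_C(\Omega^2)$ through $\varinjlim\Ext^2(\OO_{\tilde X}/I_C^n,\Omega^2)$. Its leading piece is
\[
H^0\bigl(C,\Omega^2_{\tilde X}|_C\otimes\det N_{C/\tilde X}\bigr)=H^0(C,\OO_C)=\mathbb C,
\]
spanned by $[C]$. The higher-order pieces must either vanish or map onto something generated by $[C]$, and this is what yields a surjection onto $\mathbb C\cdot[C]$. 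I expect this to be the main obstacle: controlling the higher infinitesimal neighbourhoods with $\operatorname{Sym}^n N^\vee_C=\OO(n)^{\oplus(n+1)}$, and pinning down exactly which quotient carries the fundamental class. I would first try to mirror Friedman's identification of $T^1_p\cong\mathbb C$ for the node. Since $T^1_p$ is one-dimensional, surjectivity then reduces to showing that the composite map sends the generator of $T^1_p$ (the smoothing $xy-zw=t$) to a nonzero multiple of $[C]$. I would verify this by an explicit Čech computation on the standard cover of the total space of $\OO(-1)^{\oplus 2}$.

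Finally, canonicity follows because every map used (the Leray edge maps, contraction with $\omega$ up to a local unit, and the fundamental-class map) is natural. Summing over the nodes gives the surjection onto $H^2_\Gamma(\tilde X,\Omega^2_{\tilde X})=\bigoplus_k\mathbb C[\tilde C_k]$.
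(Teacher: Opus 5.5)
Your outline follows the paper's own proof: Schlessinger's identification, the Leray spectral sequence for local cohomology, trivialising $K_{\tilde X}$ near $\Gamma$ to pass to $\Omega^2_{\tilde X}$, then fundamental classes. (The paper trivialises via the section of $\OO(2\tilde Y)\cong K_{\tilde X}^{-1}$, which is nowhere zero off $\tilde Y$; your local $\omega$ does the same job up to scalar.) As written, however, it does not prove the lemma. After Step~1 you leave surjectivity as ``the task''. You then make surjectivity depend on Step~3, which you flag as the main obstacle and defer to an unexecuted \v{C}ech computation.

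The missing observation is that surjectivity is formal and needs no knowledge of $H^2_C(\Omega^2_{\tilde X})$. In the spectral sequence $H^a_{\{p\}}(R^b\push T_{\tilde X})\Rightarrow H^{a+b}_C(\tilde X,T_{\tilde X})$, the only other total-degree-$2$ terms are:
\begin{itemize}
\item $E_2^{1,1}=H^1_{\{p\}}(R^1\push T_{\tilde X})$, which vanishes because $R^1\push T_{\tilde X}$ is a skyscraper;
\item $E_2^{0,2}$, which vanishes because $R^2\push=0$.
\end{itemize}
So $H^2_C(T_{\tilde X})=E_\infty^{2,0}$ is a quotient of $H^2_{\{p\}}(T^0_{X_0})$. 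This is exactly the paper's argument. Since you already noted $R^1\push T_{\tilde X}=0$, you even get an isomorphism. Lemma~\ref{lem:log_higher_direct_images} concerns $E_2^{2,1}$ and plays no role in degree $2$.

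Step~3 is needed only to describe the target as $\C\cdot[C]$, which the paper merely asserts. There your twist is the wrong one. Since $\mathcal{E}xt^2(\OO_C,\OO_{\tilde X})=\det N$, one has
\[
\mathcal{E}xt^2(I^n/I^{n+1},\Omega^2_{\tilde X})\cong\Omega^2_{\tilde X}|_C\otimes\det N\otimes\operatorname{Sym}^nN,
\]
with $\operatorname{Sym}^nN=\OO(-n)^{\oplus(n+1)}$, not $\operatorname{Sym}^nN^\vee$. As $\Omega^2_{\tilde X}|_C\otimes\det N\cong\OO\oplus\OO(-3)^{\oplus 2}$, these pieces have no sections for $n\ge 1$. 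With your positive twists they would have sections, and the computation would not close.

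Because $\OO/I^n$ is Cohen--Macaulay of codimension $2$, the sequences
\[
0\to\mathcal{E}xt^2(\OO/I^n,\Omega^2_{\tilde X})\to\mathcal{E}xt^2(\OO/I^{n+1},\Omega^2_{\tilde X})\to\mathcal{E}xt^2(I^n/I^{n+1},\Omega^2_{\tilde X})\to 0
\]
are exact. Since $\mathcal{H}^0_C=\mathcal{H}^1_C=0$, we have $H^2_C(\Omega^2_{\tilde X})=H^0\bigl(C,\mathcal{H}^2_C(\Omega^2_{\tilde X})\bigr)$. Left exactness of $H^0$ then gives $H^2_C(\Omega^2_{\tilde X})\cong\C$, spanned by the section $1$ of the summand $\det N^\vee\otimes\det N=\OO$, i.e.\ by $[C]$. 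Combined with the isomorphism above, the map is an isomorphism $T^1_{p}\cong\C\,[C]$ at each node, and no \v{C}ech computation is needed.
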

\begin{proof}
By the generalized Schlessinger theorem (Lemma \ref{lem:Schlessinger}), the local deformation space is canonically isomorphic to the local cohomology of the tangent sheaf: 
$$H^0(X_0, T^1_{X_0}(-\log Y_0)) \cong H^2_Z(X_0, T^0_{X_0}(-\log Y_0)).$$ 

We analyze the Leray spectral sequence for local cohomology with supports: $$E_2^{p,q} = H_Z^p(X_0, R^q \pi_* T_{\tilde{X}}(-\log \tilde{Y})) \implies H^{p+q}_\Gamma(\tilde{X}, T_{\tilde{X}}(-\log \tilde{Y})).$$ Because $R^1\pi_* T_{\tilde{X}}(-\log \tilde{Y})$ is a skyscraper sheaf supported on the zero-dimensional locus $Z$, its higher local cohomology vanishes, specifically $H^1_Z(X_0, R^1\pi_* T_{\tilde{X}}(-\log \tilde{Y})) = 0.$ This yields a standard edge surjection:
\[
H^2_Z(X_0, T^0_{X_0}(-\log Y_0)) \twoheadrightarrow H^2_\Gamma(\tilde{X}, T_{\tilde{X}}(-\log \tilde{Y})).
\]

To identify this target space, we use the $\frac{1}{2}$-log Calabi-Yau condition. Because $K_{\tilde{X}} + 2\tilde{Y} \sim 0$, there is a canonical global isomorphism of sheaves $T_{\tilde{X}}(-\log \tilde{Y}) \cong \Omega_{\tilde{X}}^2(\log \tilde{Y})(\tilde{Y})$. 
Crucially, the exceptional curves $\Gamma$ are strictly disjoint from the boundary divisor $\tilde{Y}$. Therefore, in a formal neighborhood of $\Gamma$, the logarithmic poles and the twist become trivial, yielding a local identification $T_{\tilde{X}}(-\log \tilde{Y})|_\Gamma \cong \Omega^2_{\tilde{X}}|_\Gamma$. 

Taking local cohomology with supports in $\Gamma$, we obtain the isomorphism:
\[
H^2_\Gamma(\tilde{X}, T_{\tilde{X}}(-\log \tilde{Y})) \cong H^2_\Gamma(\tilde{X}, \Omega^2_{\tilde{X}}).
\]
By standard Hodge theory, $H^2_\Gamma(\tilde{X}, \Omega^2_{\tilde{X}})$ is exactly the complex vector space generated by the fundamental classes of the irreducible components of $\Gamma$. Composing the Leray edge surjection with this isomorphism yields the result.
\end{proof}

\begin{cor} \label{cor:independent_smoothings}
There exist global smoothings $(X_t, Y_t)$ of the singular log pair $(X_0, Y_0)$. Furthermore, the singular nodes $p \in Z$ can be smoothed entirely independently of one another without requiring any topological balancing conditions among the contracted curves.
\end{cor}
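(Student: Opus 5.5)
The plan is to combine the vanishing and surjectivity statements of Theorem \ref{thm:unobstructedness} with the standard formal-to-actual smoothing argument, and then use the product structure of the local deformation space to get independence.

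\textbf{Existence of a global smoothing.} By Theorem \ref{thm:unobstructedness}(1) we have $\mathbb{T}^2_{X_0}(-\log Y_0)=0$. Hence the deformation functor of the pair $(X_0,Y_0)$ is unobstructed, and its semi-universal (Kuranishi) space is smooth with tangent space $\mathbb{T}^1_{X_0}(-\log Y_0)$. The local deformation space splits as
\[
H^0(X_0,T^1_{X_0}(-\log Y_0))\cong\bigoplus_{p\in Z}T^1_p,
\]
where each $T^1_p\cong\C$ is the versal smoothing direction of the node $xy-zw=t$. By Theorem \ref{thm:unobstructedness}(2) the map $\mathbb{T}^1_{X_0}(-\log Y_0)\to\bigoplus_p T^1_p$ is surjective. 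So for any vector $(\lambda_p)_{p\in Z}$ there is a first-order deformation of the pair whose image at each node is $\lambda_p$. Since the base is smooth, this first-order deformation is tangent to a one-parameter disc $\Delta$ in the Kuranishi space.

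\textbf{Local form at the nodes.} The versal deformation of the pair restricts, near each node, to a deformation of the germ $(X_0,p)$. It is induced by a map from the Kuranishi space to the versal base $\C$ of the node. These local maps assemble into a morphism from the global Kuranishi space to $\prod_p\C$, whose differential is exactly the surjection above. Therefore this morphism is a submersion near the origin.

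Choosing all $\lambda_p\neq 0$ and restricting to a suitable disc, the family near $p$ is $xy-zw=u_p(t)$ with $u_p'(0)=\lambda_p\neq 0$. Thus every node is smoothed and $X_t$ is smooth for $t\neq 0$. The boundary $Y_0$ is disjoint from $Z$ and is carried along in the deformation of the pair, so it deforms to a smooth divisor $Y_t$, giving the required pair $(X_t,Y_t)$.

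\textbf{Independence.} Because the morphism to $\prod_p\C$ is submersive, the preimage of any coordinate subspace is smooth. So we may prescribe an arbitrary subset $S\subset Z$ to smooth, keeping the remaining nodes as nodes, by taking a disc inside the preimage of $\{u_p=0,\ p\notin S\}$ that is transverse to the remaining coordinates.

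This is where the contrast with the Calabi--Yau case of \cite{F86} enters. There, the image of the global-to-local map is cut out by linear relations $\sum\lambda_p[C_p]=0$ in $H^2_\Gamma(\tilde X,\Omega^2_{\tilde X})\to H^2(\tilde X,\Omega^2_{\tilde X})$, which are the topological balancing conditions. Here, Lemma \ref{lem:local_CY_identification} together with diagram (\ref{eq:friedman_diagram}) shows that the obstruction map $ob$ lands in a space that surjects onto $H^2(\tilde X,T_{\tilde X}(-\log\tilde Y))$, and that space is $0$ by Theorem \ref{thm:unobstructedness}(3). So there are no relations among the $\lambda_p$.

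\textbf{Main obstacle.} The delicate step is passing from the first-order statement to the actual independence of the local smoothing parameters. This needs the global Kuranishi space of the pair to map submersively to the product of the local versal bases, not just to have surjective tangent map at first order.

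I would handle this with the standard argument of \cite{F86}: the pair functor maps to the product of the local functors of the nodes. Both functors are smooth — the global one by (1), the local ones because nodes are hypersurface singularities with $T^2=0$. A morphism of smooth functors that is surjective on tangent spaces is itself smooth. Since $Y_0\cap Z=\emptyset$, the log structure plays no role near the nodes, so the argument carries over verbatim.
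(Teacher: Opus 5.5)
Your proof is correct and takes essentially the paper's route: unobstructedness from $\mathbb{T}^2_{X_0}(-\log Y_0)=0$, plus surjectivity of $\mathbb{T}^1_{X_0}(-\log Y_0)\to H^0(X_0,T^1_{X_0}(-\log Y_0))$. Your extra step, that the Kuranishi space of the pair maps submersively onto $\prod_{p\in Z}\C$ because a morphism of smooth deformation functors that is surjective on tangent spaces is smooth, is what justifies smoothing some nodes while keeping others to all orders; the paper only asserts this at first order, phrasing it via Lemma~\ref{lem:local_CY_identification} and the vanishing of $H^2(\tilde X,T_{\tilde X}(-\log\tilde Y))$.

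One minor slip in your comparison paragraph: knowing that the target of $ob$ surjects onto $0$ does not force $ob=0$. The right reason is that $R^1\pi_*T_{\tilde X}(-\log\tilde Y)=0$ for small resolutions of nodes, so $H^2(X_0,T^0_{X_0}(-\log Y_0))\cong H^2(\tilde X,T_{\tilde X}(-\log\tilde Y))=0$. This is not load-bearing, since you already invoke Theorem~\ref{thm:unobstructedness}(2) for the surjectivity.
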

\begin{proof}
By Lemma \ref{lem:local_CY_identification}, the local deformation space $H^0(X_0, T^1_{X_0}(-\log Y_0))$ surjects onto the fundamental classes of the curves. This guarantees that within the space of formal local deformations of the nodes, there exist directions corresponding precisely to the standard conifold smoothings of the rational curves. 

Because Theorem \ref{thm:unobstructedness} establishes that $H^2(X_0, T^0_{X_0}(-\log Y_0)) = 0$, the exact sequence local-to-global dictates that the map:
\[
\vT^1_{X_0}(-\log Y_0) \twoheadrightarrow H^0(X_0, T^1_{X_0}(-\log Y_0))
\]
is strictly surjective. Therefore, any combination of local conifold smoothings at the individual nodes lifts to an unobstructed global deformation of the pair. Because the global target space $H^2(\tilde{X}, T_{\tilde{X}}(-\log \tilde{Y}))$ inherently vanishes due to the Fano geometry, there are no homological cycle relations connecting the components of $\Gamma$, allowing the smoothings to be chosen independently.
\end{proof}

\begin{prop}\label{prop:smoothingHalfPreservation} For the construction of Section \ref{ss:12logCY}, 
the smooth general fiber $(X_t, Y_t)$ of the smoothing satisfies 
\[ K_{X_t} + 2Y_t \cong \OO_{X_t}. \]
\end{prop}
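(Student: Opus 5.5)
The plan is to show that the line bundle $\mathcal{L}:=\omega_{\mathcal{X}/\Delta}\otimes\OO_{\mathcal{X}}(2\mathcal{Y})$ on the total space $\mathcal{X}\to\Delta$ of the smoothing is trivial near the central fiber, and then restrict it to $X_t$. Here $\mathcal{Y}\subset\mathcal{X}$ is the divisor swept out by the $Y_t$.

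\textbf{Step 1: the central fiber.} On $X_0$ the nodes are Gorenstein and $Y_0$ avoids $Z$. Hence $\omega_{X_0}$ is invertible and $\OO_{X_0}(Y_0)$ is Cartier. Since $\pi$ is small, $\pi^*\omega_{X_0}\cong\omega_{\tilde X}$. Lemma \ref{lem:BlowHalf} gives $K_{\tilde X}+2\tilde Y\cong\OO_{\tilde X}$, so $\pi^*(\omega_{X_0}(2Y_0))\cong\OO_{\tilde X}$. Using $\pi_*\OO_{\tilde X}=\OO_{X_0}$ (normality), the projection formula yields $\omega_{X_0}(2Y_0)\cong\OO_{X_0}$.

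\textbf{Step 2: extending to nearby fibers.} The family $\mathcal{X}\to\Delta$ is flat and its fibers are Gorenstein, so $\omega_{\mathcal{X}/\Delta}$ is invertible and commutes with base change. The divisor $\mathcal{Y}$ is smooth over $\Delta$ and stays away from the singular points, so it is Cartier. Therefore $\mathcal{L}$ is a line bundle with $\mathcal{L}|_{X_0}\cong\OO_{X_0}$. To extend the trivialization, we need $H^0(X_0,\OO_{X_0})\cong\C$ and $H^1(X_0,\OO_{X_0})=0$. Since the nodes are rational singularities, $H^1(X_0,\OO_{X_0})=H^1(\tilde X,\OO_{\tilde X})=H^1(X,\OO_X)=0$, because $X$ is Fano and blow-ups of points preserve $H^1(\OO)$. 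The vanishing of $H^1(\OO_{X_0})$ then lets us lift a nowhere-vanishing section of $\mathcal{L}|_{X_0}$ order by order in $t$. Alternatively, we can apply cohomology and base change: $h^0(\mathcal{L}|_{X_t})$ is upper semicontinuous, and $\chi(\mathcal{L}|_{X_t})=\chi(\OO_{X_t})$ is constant. Either way, after shrinking $\Delta$, $\mathcal{L}$ has a section that does not vanish on $X_0$. Its zero locus is closed and, by properness, disjoint from a neighborhood of $X_0$, hence empty over a small disk.

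\textbf{Step 3: conclusion.} Restricting $\mathcal{L}$ to $X_t$ gives $K_{X_t}+2Y_t\cong\OO_{X_t}$.

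The main obstacle is Step 2. We must justify that $\mathcal{Y}$ exists as a relative Cartier divisor, which comes from the deformation being of the pair $(X_0,Y_0)$ as in Corollary \ref{cor:independent_smoothings}. We also need the triviality of $\mathcal{L}$ to spread out from $X_0$, which rests on $H^1(\OO_{X_0})=0$ and the rationality of the ordinary double points. In the analytic category I would argue formally first and then use Grauert's theorem on direct images to get actual triviality over a small disk.
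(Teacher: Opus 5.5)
Your proof is correct, but after the first step it takes a different route from the paper.

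\textbf{Step 1.} This matches the paper's first paragraph. You also justify what the paper only asserts, namely that the linear equivalence ``descends'' to $X_0$. You do this through crepancy of the small resolution and the projection formula with $\pi_*\OO_{\tilde X}=\OO_{X_0}$.

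\textbf{Spreading out to $X_t$.} The paper argues topologically:
\begin{itemize}
\item semicontinuity gives $h^1(\OO_{X_t})=0$;
\item hence $c_1\colon\Pic(X_t)\to H^2(X_t,\Z)$ is injective by the exponential sequence;
\item $c_1(\mathcal L)$ vanishes at $t=0$ and is then declared to vanish for all $t$ by ``local constancy''.
\end{itemize}
You stay in coherent cohomology instead. $H^1(X_0,\OO_{X_0})=0$ kills the obstructions to lifting the trivializing section of $\mathcal L|_{X_0}$. Cohomology and base change (Grauert) then produces an actual section near $X_0$. Properness makes that section nowhere vanishing over a smaller disc.

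\textbf{What your route buys.} You get a stronger conclusion, namely that $\mathcal L$ is trivial on the total space over a small disc, and you need fewer inputs.
\begin{itemize}
\item You only need $\omega_{\mathcal X/\Delta}$ to be invertible, which holds for any flat family with Gorenstein fibres. The paper instead asserts that $\mathcal X$ is smooth near $Z$, which fails, for example, for a local smoothing $xy-zw=t^2$.
\item You need no topology. The paper's ``locally constant'' step across the singular fibre implicitly uses $H^2(\mathcal X,\Z)\cong H^2(X_0,\Z)$, coming from the retraction of $\mathcal X$ onto $X_0$.
\end{itemize}
The paper's route is shorter, and it fits its later use of $\Pic(X_t)\cong H^2(X_t,\Z)$ in Section~\ref{s:examples}.

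\textbf{One small imprecision.} Your ``alternative'' argument, as phrased, only yields $h^0(\mathcal L|_{X_t})\le 1$ from upper semicontinuity. To get equality you also need $h^i(\mathcal L|_{X_t})=0$ for $i\ge 1$. This follows by semicontinuity from $H^i(X_0,\OO_{X_0})=0$ for $i=1,2,3$, which holds here because $X$ is Fano and the nodes are rational. Alternatively, apply the base-change theorem directly to $H^1(X_0,\mathcal L|_{X_0})=0$, as in your main line.
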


\begin{proof}
By Lemma \ref{lem:BlowHalf}, the resolution $\tilde{X}$ satisfies $K_{\tilde{X}} + 2\tilde{Y} \cong \OO_{\tilde{X}}$. Since the curves contracted by $\pi: \tilde{X} \to X_0$ are disjoint from the strict transform $\tilde{Y}$, and ODPs are Gorenstein singularities, the canonical divisor $K_{X_0}$ is well-defined as a Cartier divisor. The linear equivalence on $\tilde{X}$ descends to the singular fiber, yielding $K_{X_0} + 2Y_0 \cong \OO_{X_0}$. 

Now, let $(\mathcal{X}, \mathcal{Y}) \to \Delta$ be a flat, $1$-parameter smoothing of the pair $(X_0, Y_0)$. Since $X_0$ has only ordinary double points, the total space $\mathcal{X}$ is smooth in a neighborhood of the singular locus $Z$. We define the relative line bundle:
\[ \mathcal{L} = \OO_{\mathcal{X}}(K_{\mathcal{X}/\Delta} + 2\mathcal{Y}). \]
By construction, the restriction to the central fiber is trivial: $\mathcal{L}|_{X_0} \cong \OO_{X_0}$. 

To prove $\mathcal{L}$ remains trivial on nearby fibers, we examine the cohomology of the structure sheaf. Since $X$ is a Fano threefold, $H^1(X, \OO_X) = 0$, and thus $H^1(\tilde{X}, \OO_{\tilde{X}}) = 0$ by the blow-up formula. Furthermore, as ODPs are rational singularities, the higher direct images of the structure sheaf vanish, i.e., $R^i\pi_* \OO_{\tilde{X}} = 0$ for $i > 0$. The Leray spectral sequence then implies $H^1(X_0, \OO_{X_0}) \cong H^1(\tilde{X}, \OO_{\tilde{X}}) = 0$.

By the semicontinuity theorem, $h^1(X_t, \OO_{X_t}) = 0$ for sufficiently small $t$. This implies that the first Chern class map $c_1: \Pic(\mathcal{X}/\Delta) \to R^2\pi_* \underline{\Z}$ is injective. Since the section $c_1(\mathcal{L})$ is locally constant and vanishes at $t=0$, it must be zero for all $t \in \Delta$. Consequently, $\mathcal{L}$ is trivial in the relative Picard group, which implies $\mathcal{L}|_{X_t} \cong \OO_{X_t}$ for all $t$.
\end{proof}

\section{Examples}\label{s:examples}
\subsection{Projection from a point and partial smoothings}

\begin{ex} \label{ex:cubic_to_double_solid}
Let $X \subset \PP^4$ be a smooth cubic threefold and $Y \in \abs{H}$ a generic hyperplane section. Let $y \in Y$ be a generic point. It is well-known that there are exactly six lines $L_1, \dots, L_6$ in $X$ passing through $y$. Since $y \in Y$ is generic, these lines are transverse to $Y$ and thus represent anchored rational curves of degree $e=1$.

Let $\sigma: \tilde{X} \to X$ be the blow-up of $X$ at $y$ with exceptional divisor $E \cong \PP^2$. The strict transforms $\tilde{L}_1, \dots, \tilde{L}_6$ are disjoint smooth rational curves with normal bundle $N_{\tilde{L}_i/\tilde{X}} \cong \OO(-1) \oplus \OO(-1)$. The linear system $\abs{H-E}$ on $\tilde{X}$ defines a morphism $\phi: \tilde{X} \to \PP^3$, which is the resolution of the linear projection from the point $y \in \PP^4$. 

Geometrically, $\phi$ is a $2:1$ cover of $\PP^3$. For any point $p \in \PP^3$, the line in $\PP^4$ passing through $y$ and $p$ intersects the cubic $X$ at $y$ and two additional points (counting multiplicity); the map $\phi$ identifies these two points. The branch locus of this cover is a quartic surface $B \subset \PP^3$. The six lines passing through $y$ correspond precisely to the six points in $\PP^3$ where the projection is not finite (the fibers are the entire lines). 

By contracting the strict transforms $\tilde{L}_i$, we obtain a singular threefold $X_0$ with six ordinary double points. This $X_0$ is the double cover of $\PP^3$ branched along a quartic surface with six nodes. A global smoothing of $(X_0, Y_0)$ yields a smooth threefold $X_2$ which is a double cover of $\PP^3$ branched along a smooth quartic surface. Thus, $X_2$ is a smooth quartic double solid. In this way we recover the classical projection from a point from the viewpoint of log-conifolds transitions.
The same projection from a point send the del Pezzo threefold of degree $d$ to one degree less. 
\end{ex}
\begin{ex}
Let $X$ be a smooth prime Fano threefold of index $2$ and degree $d \in \{2,3,4,5,8\}$. Let $Y \in \abs{-\frac{1}{2} K_X}$ be a generic smooth divisor, and let $C$ be a generic rational curve of degree $e$ intersecting $Y$ transversely. We denote this intersection by $\xi := C \cap Y$. 

By Theorem \ref{thm:AZ}, there is a finite number $N_e$ of rational curves $C_1, \dots, C_{N_e}$ of degree $e$ intersecting $Y$ exactly at the points $\xi$. Let $\sigma: \tilde{X} = \Bl_\xi X \to X$ be the blow-up of $X$ at $\xi$, and let $\tilde{C}_1, \dots, \tilde{C}_{N_e}$ denote the strict transforms of these curves. As previously established, each strict transform has normal bundle:
\[ N_{\tilde{C}_i/\tilde{X}} \cong \OO(-1) \oplus \OO(-1). \]
Assuming disjointness, they can be simultaneously contracted to yield a singular threefold $X_0$ with $N_e$ ordinary double points. 

Let $S \subset \{1, \dots, N_e\}$ be a choice of subset. By our independent smoothing results Corollary \ref{cor:independent_smoothings}, we can completely smooth the nodes of $X_0$ corresponding to the indices in $S$. In order to obtain a completely smooth threefold, we perform a small resolution on the remaining nodes corresponding to the complement $\{1, \dots, N_e\} \setminus S$ (which geometrically corresponds to leaving the curves $\tilde{C}_j$ for $j \notin S$ intact without contracting them). Denote this resulting smooth threefold by $X_S$.

\begin{prop}
If $N_e > 1$ and $S$ is a non-empty proper subset of $\{1, \dots, N_e\}$, then the smooth threefold $X_S$ is not Kähler. 
\end{prop}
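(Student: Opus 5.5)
The idea is the classical Friedman argument: in a Kähler manifold an effective curve can never be homologous to zero. I would show that each surviving exceptional curve $\tilde C_j$, $j\notin S$, becomes null-homologous in $X_S$. Fix $j\notin S$ and $i\in S$; both exist because $S$ is a non-empty proper subset. If $\omega$ were a Kähler form on $X_S$, then $\int_{\tilde C_j}\omega>0$. This contradicts $[\tilde C_j]=0$ in $H_2(X_S,\mathbb{R})$.

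\textbf{Step 1: all the $\tilde C_k$ are homologous in $\tilde X$.} Since $X$ is prime of index two, $H_2(X,\Z)\cong\Z$ is generated by the class of a line, so $[C_k]=e\,[\ell]$ for every $k$. Each $C_k$ passes transversely through every point $\xi_m$ of $\xi$, so in $\tilde X=\Bl_\xi X$ we have
\[
[\tilde C_k]=e\,[\ell]-\sum_{m}[\ell_{E_m}],
\]
where $\ell_{E_m}$ is a line in $E_m\cong\PP^2$. Hence $[\tilde C_j]=[\tilde C_i]$ in $H_2(\tilde X,\Z)$ for all $i,j$.

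\textbf{Step 2: local topology of the smoothing.} For $k\in S$, let $N_k\subset\tilde X$ be a small tubular neighbourhood of $\tilde C_k$. It retracts onto $\tilde C_k\cong S^2$ and has boundary $\partial N_k\cong S^2\times S^3$. The sphere $S^2\times\{pt\}$ is isotopic in $N_k$ to $\tilde C_k$. Smoothing the node $p_k$ replaces $N_k$ by a Milnor fibre $M_k\cong T^*S^3$, glued along the same boundary. Since $H_2(M_k)=0$, the boundary sphere $\sigma_k=S^2\times\{pt\}$ is null-homologous in $M_k$. Therefore
\[
X_S=\Bigl(\tilde X\setminus\bigcup_{k\in S}N_k\Bigr)\cup\bigcup_{k\in S}M_k ,
\]
up to diffeomorphism. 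This uses Corollary \ref{cor:independent_smoothings} to smooth exactly the nodes in $S$, and the fact that the small resolution of the other nodes is topologically just $\tilde X$ near $\tilde C_j$.

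\textbf{Step 3: Mayer--Vietoris.} Set $W=\tilde X\setminus\bigcup_{k\in S}N_k$. Mayer--Vietoris for $\tilde X=W\cup\bigcup N_k$ shows that the relation $[\tilde C_j]=[\tilde C_i]$ in $\tilde X$ lifts, in $H_2(W)$, to a relation
\[
[\tilde C_j]=\sum_{k\in S}a_k[\sigma_k]+\delta ,
\]
where $\delta$ lies in the image of $H_2(\partial N_k)$. Concretely, one takes a $3$-chain $\Gamma$ with $\partial\Gamma=\tilde C_j-\tilde C_i$, makes it transverse to $\partial N_i$, and cuts it along $\partial N_i$. Since $H_2(S^2\times S^3)$ is generated by $\sigma_k$, all these correction terms are multiples of the $\sigma_k$, $k\in S$. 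Pushing forward to $X_S$, each $\sigma_k$ dies in $M_k$, so $[\tilde C_j]=0$ in $H_2(X_S,\Z)$. Combined with the positivity of $\int_{\tilde C_j}\omega$, this gives the contradiction, and the non-K\"ahler conclusion follows.

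\textbf{Main obstacle.} The hard part is the bookkeeping in Step 3. One must check that the Mayer--Vietoris correction terms really lie in the span of the boundary spheres $\sigma_k$ and contain no other classes. Here the key input is $H_2(N_k)\cong\Z[\tilde C_k]=\Z[\sigma_k]$. A secondary point is to justify that the global smoothing is locally the standard conifold smoothing at each $p_k$, $k\in S$, and trivial elsewhere. This follows because the lifted first-order deformation can be chosen with nonzero component exactly at the nodes in $S$, using Corollary \ref{cor:independent_smoothings}.
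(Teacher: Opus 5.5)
Your proposal is correct and follows the same route as the paper: all $\tilde C_k$ share one class in $H_2(\tilde X)$, smoothing a node in $S$ kills that class, so the surviving $\tilde C_j$ is null-homologous in $X_S$, contradicting $\int_{\tilde C_j}\omega>0$; your Mayer--Vietoris argument on $W=\tilde X\setminus\bigcup_{k\in S}N_k$ supplies the topological bookkeeping that the paper only asserts. One small precision on your ``secondary point'': choosing a first-order direction with zero component at the nodes $j\notin S$ is not enough on its own, since such a node could still be smoothed at higher order, so you should instead take the one-parameter family inside the preimage of $\{s_j=0,\ j\notin S\}$ under the map $\mathrm{Def}(X_0)\to\prod_k\mathrm{Def}(X_0,p_k)$, which is a submersion of smooth germs because $\mathbb{T}^2_{X_0}=0$ and $\mathbb{T}^1_{X_0}\to H^0(X_0,T^1_{X_0})$ is surjective.
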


\begin{proof}
In the resolution space $\tilde{X}$, the strict transforms $\tilde{C}_1, \dots, \tilde{C}_{N_e}$ all share the same homology class $[\tilde{C}] \in H_2(\tilde{X}, \Z)$, as they are strict transforms of curves of the same degree passing through the exact same base locus $\xi$. 

During the conifold transition to $X_S$, the curves $\tilde{C}_i$ for $i \in S$ are contracted to nodes and then smoothed. A fundamental topological consequence of this local smoothing is that the homology class of the contracted curves becomes trivial in the smoothed manifold; that is, the relation $[\tilde{C}] = 0$ is enforced in $H_2(X_S, \Z)$.

However, because $S$ is a proper subset, there is at least one index $j \notin S$. The curve $\tilde{C}_j$ is never contracted and survives as a well-defined, effective holomorphic curve inside $X_S$. Its homology class in $X_S$ is precisely the image of the shared class $[\tilde{C}]$, which we just established is zero. 

If $X_S$ were a Kähler manifold, it would possess a Kähler metric with an associated closed Kähler form $\omega > 0$. The volume of the effective curve $\tilde{C}_j$ would be strictly positive, $\int_{\tilde{C}_j} \omega > 0$. But since $[\tilde{C}_j] = 0$ in homology, Stokes' theorem dictates that this integral must evaluate to zero, a contradiction. Therefore, $X_S$ cannot be Kähler.
\end{proof}
\end{ex}

\subsection{Systematic smoothings and free curves}
\begin{prop}
Let $X$ be a smooth prime Fano threefold of index two and degree $d \in \{2,3,4,5,8\}$. Let $\tilde{X}$ be the blow-up of $X$ at $e$ points $\xi \subset Y$ lying transversely on $N_e$ rational curves $C_i$ of degree $e$. Let $\pi \colon \tilde{X} \to X_0$ be the conifold contraction of their strict transforms $\tilde{C}_i$ \emph{assumed disjoint}, and let $X_t$ be a generic global smoothing of $X_0$ where all $N_e$ nodes are smoothed. Then:
\begin{enumerate}
    \item The Hodge number $h^{1,2}(X_t)$ satisfies $h^{1,2}(X_t) = h^{1,2}(X) + N_e - 1$. The Picard group $\Pic(X_t)$ has rank $e$ and is naturally identified with the subgroup:
    \[
    \Pic(X_t) \cong \left\{ aH - \sum_{j=1}^{e} b_j E_j \;\middle|\; ae = \sum_{j=1}^{e} b_j \right\} \subset \Pic(\tilde{X})
    \]
    where $H$ is the pullback of the ample generator of $\Pic(X)$ and $E_1, \dots, E_e$ are the exceptional divisors.
    
\item If $e > d$, the strict transform $\tilde{Y} = H - \sum_{j=1}^{e} E_j$ becomes rigid and does not move in its linear system. Furthermore, the divisor classes $H - eE_j$ are not effective, meaning their linear systems are empty.    
\end{enumerate}
\end{prop}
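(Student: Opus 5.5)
I will follow the Clemens–Friedman analysis of a conifold transition: compare the topology of $\tilde X$, of $X_0$ and of the smoothing $X_t$ through the vanishing cycles, and then read off $\Pic(X_t)$ from the classes in $H^2(\tilde X)$ that survive. Throughout, write $\tilde X=\Bl_\xi X$ with $\xi$ the $e$ anchor points, so $H^2(\tilde X,\Z)=\Z H\oplus\bigoplus_{j=1}^e \Z E_j$.

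\textbf{Step 1 (the relation among the contracted curves).} Each $\tilde C_i$ has class $[\tilde C]=e\,\ell-\sum_j f_j$, where $\ell$ is the line class and $f_j$ the line in $E_j$. Since every $\tilde C_i$ has this same class, the span of $\{[\tilde C_i]\}$ in $H_2(\tilde X)$ has rank one. By Clemens' formula for a conifold transition with $N$ nodes and $r$ independent relations among the exceptional curves,
\[
b_2(X_t)=b_2(\tilde X)-(N_e-r),\qquad b_3(X_t)=b_3(\tilde X)+2r ,
\]
where the rank of the span of the $[\tilde C_i]$ is $N_e-r$.

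\textbf{Step 2 (Betti and Hodge numbers).} The span has rank one, so $N_e-r=1$, i.e.\ $r=N_e-1$. Hence $b_2(X_t)=(1+e)-1=e$ and $b_3(X_t)=b_3(X)+2(N_e-1)$. The extra $b_3$ is generated by the vanishing $3$-spheres. To assign these classes Hodge type, I would use $h^{3,0}(X_t)=0$, which follows from $K_{X_t}=-2Y_t$ (Proposition \ref{prop:smoothingHalfPreservation}). Together with the $\partial\bar\partial$/Hodge-symmetry input from Section \ref{s:hodge}, via \cite{Che24}, this gives $h^{1,2}(X_t)=h^{1,2}(X)+N_e-1$.

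\textbf{Step 3 (the Picard group).} Since $h^1(\OO_{X_t})=h^2(\OO_{X_t})=0$ by semicontinuity from $X_0$, we get $\Pic(X_t)\cong H^2(X_t,\Z)$. The specialization map and the Mayer–Vietoris sequence around the vanishing spheres identify $H^2(X_t)$ with the classes in $H^2(\tilde X)$ that vanish on every contracted curve. For $D=aH-\sum b_jE_j$ this condition reads $D\cdot\tilde C=ae-\sum b_j=0$, which gives the stated subgroup, of rank $e$.

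\textbf{Step 4 (rigidity and non-effectivity, part 2).} The divisor $\tilde Y=H-\sum E_j$ satisfies $e\cdot 1=\sum_j 1$, so it lies in the subgroup. Its sections are the members of $\abs{H}$ through the $e$ points $\xi$. Since $h^0(H)=d+2$ and the general points of $\xi$ on $Y$ impose independent conditions modulo $Y$ itself, the system $\abs{H}\otimes I_\xi$ reduces to $Y$ alone once $e>d$. This is the same count as in the stability corollary above. For $H-eE_j$, a member would be a hyperplane section with multiplicity $e$ at a general point. But a hyperplane section of $X\subset\PP^{d+1}$ (or of its degree-two model) has multiplicity at most $2$ at a point, except for cones, and has multiplicity at most $\deg$ in any case. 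So for $e>d\ge 2$ the class is not effective.

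\textbf{Main obstacle.} The delicate point is the Hodge-type assignment in Step 2 on the non-Kähler manifold $X_t$: the Clemens count naturally controls $b_3$, not $h^{1,2}$. I would first try to use degeneration of the Frölicher spectral sequence for smoothings of nodal threefolds \cite{F19,Li24}, rather than assume Kähler identities.
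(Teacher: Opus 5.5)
Your proposal is correct and follows essentially the paper's route: Clemens' count with a rank-one span of the $[\tilde C_i]$ for $b_3$, the degree-zero descent condition $ae=\sum b_j$ together with $H^2(X_t,\Z)\cong H^2(X_0,\Z)$ and the exponential sequence for $\Pic(X_t)$, and the count $h^0(Y,-K_Y\otimes I_{\xi})=\max(0,d+1-e)$ for the rigidity of $\tilde Y$. Two points are only presentational differences. First, your explicit use of $h^{3,0}(X_t)=0$ and Hodge symmetry via the $\partial\bar\partial$-lemma spells out what the paper compresses into ``and hence $h^{1,2}$''. Second, your bound that a member of $\abs{H}$ has multiplicity at most $d$ at a point is equivalent to the paper's computation $(H-eE_j)\cdot(H-E_j)^2=d-e<0$ (with an \'etale argument at a general point for $d=2$); only this degree bound is needed, not your ``at most $2$ except for cones'' remark.
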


\begin{proof}
(1) In the resolution space $\tilde{X}$, the strict transforms $\tilde{C}_i$ for $1 \le i \le N_e$ all share the same homology class $[\tilde{C}] \in H_2(\tilde{X}, \Z)$ because they are strict transforms of curves of the same degree passing through the exact same base locus $\xi$. Therefore, they span a $1$-dimensional subspace in homology. By standard conifold transition theory \cite{C83b,F86} or \cite[Prop. 4.6]{C25} and references there-in, smoothing all $N_e$ nodes increases the third Betti number (and hence $h^{1,2}$) by the number of nodes minus the dimension of the subspace spanned by the exceptional curves, yielding $h^{1,2}(X_t) = h^{1,2}(\tilde{X}) + N_e - 1 = h^{1,2}(X) + N_e - 1$.

For the Picard group, a class $D = aH - \sum_{j=1}^e b_j E_j \in \Pic(\tilde{X})$ descends to a Cartier divisor on $X_0$ if and only if its restriction to the contracted curves $\tilde{C}_i \cong \PP^1$ has degree zero. Since the original curves $C_i$ have degree $e$, $H \cdot \tilde{C}_i = e$. Because they pass transversely through the $e$ blown-up points $\xi$, $E_j \cdot \tilde{C}_i = 1$ for all $j$. Thus, $D \cdot \tilde{C}_i = a(H \cdot \tilde{C}_i) - \sum_{j=1}^e b_j (E_j \cdot \tilde{C}_i) = ae - \sum_{j=1}^e b_j = 0$, giving the stated orthogonal complement.

Finally, to see that this Picard group persists on the general smoothed fiber $X_t$, note that since $X$ is Fano and ordinary double points are rational singularities, we have $H^i(X_0, \OO_{X_0}) = 0$ for $i=1,2$. By the semicontinuity theorem, $H^i(X_t, \OO_{X_t}) = 0$ for a general small smoothing, so the exponential exact sequence yields canonical isomorphisms $\Pic(X_0) \cong H^2(X_0, \Z)$ and $\Pic(X_t) \cong H^2(X_t, \Z)$. Topologically, the local smoothing of a conifold node replaces an $S^2$ with an $S^3$. This creates a new 3-cycle but leaves the second cohomology invariant provided no further 2-cycles are killed. Since the 2-cycles associated to the contracted curves were already eliminated when passing from $\tilde{X}$ to $X_0$, we obtain $H^2(X_t, \Z) \cong H^2(X_0, \Z)$, which gives the desired identification $\Pic(X_t) \cong \Pic(X_0)$.

(2) We will show that $h^0(\tilde{X}, \OO_{\tilde{X}}(\tilde{Y})) = 1$. Consider the standard short exact sequence of sheaves on $\tilde{X}$:
\[
0 \to \OO_{\tilde{X}} \to \OO_{\tilde{X}}(\tilde{Y}) \to \OO_{\tilde{Y}}(\tilde{Y}) \to 0.
\]
Taking the long exact sequence in cohomology, we obtain:
\[
0 \to H^0(\tilde{X}, \OO_{\tilde{X}}) \to H^0(\tilde{X}, \OO_{\tilde{X}}(\tilde{Y})) \to H^0(\tilde{Y}, \OO_{\tilde{Y}}(\tilde{Y})) \to H^1(\tilde{X}, \OO_{\tilde{X}}) \to \dots
\]
Because $X$ is a Fano threefold, $H^1(X, \OO_X) = 0$. Since $\tilde{X}$ is a blow-up of $X$ at points, this vanishing is preserved: $H^1(\tilde{X}, \OO_{\tilde{X}}) = 0$. Furthermore, $H^0(\tilde{X}, \OO_{\tilde{X}}) \cong \C$. Thus, the sequence reduces to:
\[
h^0(\tilde{X}, \OO_{\tilde{X}}(\tilde{Y})) = 1 + h^0(\tilde{Y}, \OO_{\tilde{Y}}(\tilde{Y})).
\]
Therefore, $\tilde{Y}$ is rigid if and only if $h^0(\tilde{Y}, \OO_{\tilde{Y}}(\tilde{Y})) = 0$. 

To evaluate this, we compute the normal bundle $\OO_{\tilde{Y}}(\tilde{Y})$. Recall that by Lemma \ref{lem:BlowHalf}, we have $K_{\tilde{X}} = -2\tilde{Y}$. By the adjunction formula on $\tilde{X}$, the canonical class of $\tilde{Y}$ is:
\[
K_{\tilde{Y}} = (K_{\tilde{X}} + \tilde{Y})\big|_{\tilde{Y}} = (-\tilde{Y})\big|_{\tilde{Y}}.
\]
Hence, $\OO_{\tilde{Y}}(\tilde{Y}) \cong \OO_{\tilde{Y}}(-K_{\tilde{Y}})$. We now compute $h^0(\tilde{Y}, -K_{\tilde{Y}})$. The surface $\tilde{Y}$ is the blow-up of the del Pezzo surface $Y$ at the $e$ generic points $\xi$. Thus, sections of $-K_{\tilde{Y}}$ correspond exactly to sections of $-K_Y$ that vanish at $\xi$:
\[
H^0(\tilde{Y}, -K_{\tilde{Y}}) \cong H^0(Y, -K_Y \otimes \mathcal{I}_{\xi}).
\]
Because $Y$ is a del Pezzo surface of degree $d$, the Riemann-Roch theorem and Kodaira vanishing dictate that $h^0(Y, -K_Y) = K_Y^2 + 1 = d + 1$. Imposing that the sections pass through $e$ generic points imposes $\min(e, d+1)$ independent linear conditions. Consequently:
\[
h^0(\tilde{Y}, -K_{\tilde{Y}}) = \max(0, d + 1 - e).
\]
By our assumption, $e > d$, meaning $e \ge d+1$. Therefore, $h^0(\tilde{Y}, -K_{\tilde{Y}}) = 0$. Returning to our exact sequence, we conclude $h^0(\tilde{X}, \OO_{\tilde{X}}(\tilde{Y})) = 1$. The linear system $\abs{\tilde{Y}}$ contains only the divisor $\tilde{Y}$ itself, proving it is rigid.

\vspace{0.5em}
Now, we study $H - eE_j$. We will prove that $h^0(\tilde{X}, \OO_{\tilde{X}}(H - eE_j)) = 0$. The geometric behavior of the fundamental linear system $\abs{H}$ depends on the degree $d$, so the proof naturally splits into two cases.

\vspace{0.5em}
\noindent\textbf{Case 1: $d \ge 3$.} \\
For $d \in \{3, 4, 5, 8\}$, the linear system $\abs{H}$ is very ample and embeds $X$ into $\PP^{d+1}$. Because it is a closed embedding, the hyperplanes in $\PP^{d+1}$ passing through $p_j$ separate all tangent vectors at $p_j$, and have no other common base points. Consequently, the strict transform linear system $\abs{H - E_j}$ restricts to the exceptional divisor $E_j \cong \PP^2$ as the complete linear system of lines $\abs{\OO_{\PP^2}(1)}$, and is globally generated on the entire blow-up $\tilde{X}$.

By Bertini's theorem, the intersection of two general members $S_1, S_2 \in \abs{H - E_j}$ is a smooth, irreducible mobile curve $C$. Suppose, for contradiction, that there exists an effective divisor $D \in \abs{H - eE_j}$. Because $C$ is a general mobile curve, it is not contained in $D$, so their intersection number must be non-negative: $D \cdot C \ge 0$.

The homological class of the curve is $[C] = (H - E_j)^2 = H^2 - 2HE_j + E_j^2$. Because $H$ is a pullback from $X$, $H \cdot E_j = 0$, reducing the class to $[C] = H^2 + E_j^2$. Intersecting this with $D = H - eE_j$ yields:
\[
D \cdot C = (H - eE_j) \cdot (H^2 + E_j^2) = H^3 + HE_j^2 - eE_jH^2 - eE_j^3.
\]
Substituting the known intersection numbers $H^3 = d$ and $E_j^3 = 1$, we obtain:
\[
D \cdot C = d - e.
\]
Since our hypothesis states $e > d$, this implies $D \cdot C < 0$, which contradicts the geometric requirement that $D \cdot C \ge 0$. Thus, the linear system $\abs{H - eE_j}$ must be empty.

\vspace{0.5em}
\noindent\textbf{Case 2: $d = 2$.} \\
For the quartic double solid ($d=2$), the linear system $\abs{H}$ is not an embedding. Instead, it defines a $2:1$ finite morphism $\phi: X \to \PP^3$ branched along a smooth quartic surface. 

Instead, we use a local differential argument. The global sections of $H$ are precisely the pullbacks of linear forms from $\PP^3$. Because $p_j$ is chosen as a general point, it avoids the ramification locus of $\phi$. Thus, $\phi$ is locally an isomorphism (étale) in a neighborhood of $p_j$. 

A non-zero linear form on $\PP^3$ vanishing at the point $\phi(p_j)$ vanishes to order exactly $1$. Because $\phi$ is a local isomorphism at $p_j$, its pullback to $X$ must also vanish to order exactly $1$ at $p_j$. Therefore, no non-zero section of $H$ can vanish to order $2$ or higher at an unramified point. 

This implies that $H^0(\tilde{X}, \OO_{\tilde{X}}(H - eE_j)) = 0$ for all $e \ge 2$. Because our assumption $e > d$ requires $e \ge 3$, the linear system is empty.
\end{proof}

Now, we modify the standard conifold transition to reduce the Picard rank of the singular space. By simultaneously contracting disjoint rational curves of smaller degrees, we can force the Picard group of the singular threefold to be generated only by the boundary divisor $Y_0$. 

\begin{prop}\label{prop:MultDegrees}
Let $X$ be a smooth prime Fano threefold of index two and degree $d$. Let $\xi = \{\xi_1, \dots, \xi_e\} \subset Y$ be a set of generic points on a generic smooth divisor $Y \in \abs{-\frac{1}{2}K_X}$. Let $\sigma \colon \tilde{X} \to X$ be the blow-up at $\xi$ with exceptional divisors $E_1, \dots, E_e$. Assume there exist:
\begin{enumerate}
    \item A smooth rational curve $\tilde{C} \subset \tilde{X}$, which is the strict transform of a curve of degree $e$ in $X$ passing transversely through all points of $\xi$.
    \item For each $k \in \{1, \dots, e\}$, a smooth rational curve $\tilde{C}_k \subset \tilde{X}$, which is the strict transform of a curve of degree $e-1$ passing transversely through the subset $\xi \setminus \{\xi_k\}$.
\end{enumerate}
Assume further that $\tilde{C}$ and all $\tilde{C}_k$ are mutually disjoint and have normal bundle $\OO(-1) \oplus \OO(-1)$. Let $\pi \colon \tilde{X} \to X_0$ be the contraction of all $e+1$ curves. Then:
\begin{itemize}
    \item[i)] The Picard group of $X_0$ has rank $1$ and is generated by $Y_0 = \pi_*\tilde{Y}$.
    \item[ii)] If $e > d + 1$, then $H^1(X_0, \OO_{X_0}(Y_0)) \cong H^1(Y_0, N_{Y_0/X_0}) \neq 0$.
    \item[iii)] For a generic global smoothing $X_t$ of $X_0$ (not preserving the pair structure), the line bundle $\OO_{X_0}(Y_0)$ deforms to a line bundle $\mathcal{L}_t \in \Pic(X_t)$, but $H^0(X_t, \mathcal{L}_t) = 0$. Consequently, $X_t$ contains no effective surfaces.
\end{itemize}
\end{prop}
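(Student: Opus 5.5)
Part i) will follow the argument used in the previous proposition for the all-degree-$e$ contraction. A class $D = aH - \sum_j b_j E_j \in \Pic(\tilde X)$ descends to a Cartier divisor on $X_0$ exactly when $D\cdot \Gamma' = 0$ for every contracted curve $\Gamma'$. Since $X_0$ has only ODPs and $H^i(X_0,\OO_{X_0})=0$ for $i=1,2$, the map $\Pic(X_0)\to\Pic(\tilde X)$ is injective and its image is this orthogonal complement. The conditions are:
\[
D\cdot\tilde C = ae-\sum_j b_j = 0, \qquad D\cdot \tilde C_k = a(e-1)-\sum_{j\neq k} b_j = 0 \quad (k=1,\dots,e).
\]
Subtracting the $k$-th equation from the first gives $b_k = a$ for every $k$, so $D = a(H-\sum_j E_j) = a\tilde Y$. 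Hence $\Pic(X_0)\cong\Z\cdot Y_0$. I will also check that $\tilde Y$ itself is primitive, which is clear because $a$ is forced to be an integer.

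For ii), I use that the contracted curves avoid $\tilde Y$. Then $\OO_{X_0}(Y_0)$ pulls back to $\OO_{\tilde X}(\tilde Y)$, and the rationality of ODPs gives $H^1(X_0,\OO_{X_0}(Y_0))\cong H^1(\tilde X,\OO_{\tilde X}(\tilde Y))$. The sequence $0\to\OO_{\tilde X}\to\OO_{\tilde X}(\tilde Y)\to N_{\tilde Y/\tilde X}\to 0$, together with $H^1(\OO_{\tilde X})=H^2(\OO_{\tilde X})=0$, gives $H^1(\OO_{\tilde X}(\tilde Y))\cong H^1(\tilde Y, N_{\tilde Y/\tilde X}) = H^1(\tilde Y,-K_{\tilde Y})$. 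This is nonzero by the argument of the Corollary on stability of $\tilde Y$. From the proof of Theorem \ref{thm:unobstructedness}, $H^1(\tilde Y,N)$ is the cokernel of $H^0(Y,-K_Y)\to H^0(\OO_\xi)$, a map $\C^{d+1}\to\C^e$, and this cokernel is nonzero once $e>d+1$. Equivalently, $\chi(-K_{\tilde Y}) = K_{\tilde Y}^2+1 = d-e+1<0$ and $h^2=0$, so $h^1\ge e-d-1>0$.

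For iii), the line bundle must first extend. I will use $H^2(X_0,\OO_{X_0})=0$: the obstruction to extending a line bundle over an infinitesimal thickening lies in $H^2(\OO)$, so $\OO_{X_0}(Y_0)$ extends over the formal, and then the analytic, total space $\mathcal X$. This can also be argued topologically, via $\Pic\cong H^2(\cdot,\Z)$ on both fibres and the specialization isomorphism $H^2(X_t,\Z)\cong H^2(X_0,\Z)$ from the previous proposition. Either way I get $\mathcal L$ with $\mathcal L_0=\OO_{X_0}(Y_0)$. By i) and this identification, $\Pic(X_t)=\Z\mathcal L_t$. So if $X_t$ contained an effective surface $S$, then $\OO(S)=\mathcal L_t^{m}$ with $m\ge 1$; I will rule out $m\le 0$ using an intersection with a moving curve. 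It therefore suffices to show $H^0(X_t,\mathcal L_t^m)=0$ for all $m\ge1$.

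The main obstacle is the vanishing $H^0(\mathcal L_t)=0$, given that $h^0(\mathcal L_0)=1$. Upper semicontinuity only gives $h^0(\mathcal L_t)\le 1$. My plan is to show that the section $s_0$ cutting out $Y_0$ does not lift to first order along a generic smoothing direction. The obstruction to lifting is the cup product of the Kodaira--Spencer class in $\mathbb T^1_{X_0}$ (more precisely its Atiyah-twisted version for the pair $(X_0,\mathcal L)$) with $s_0$, landing in $H^1(X_0,\mathcal L_0)$; by ii) this group is $H^1(Y_0,N_{Y_0/X_0})\neq0$. Sequence (\ref{eq:DefPairToDef}) says this image is exactly the map $\mathbb T^1_{X_0}\to H^1(Y_0,N_{Y_0/X_0})$, whose kernel is the image of deformations of the pair. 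Since that map is surjective onto a nonzero space, a generic direction has nonzero obstruction, so $s_0$ does not extend and $h^0(\mathcal L_t)=0$.

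For the multiples $m\ge2$, I will first try to use $h^0(\tilde X, m\tilde Y)=1$. I expect this to follow as in part (2) of the previous proposition, by restricting to $\tilde Y$ and using that $-mK_{\tilde Y}$ has no sections for $e$ large, combined with the same non-lifting argument applied to $s_0^m$. If that fails, the fallback is a degree argument: intersect with a free curve of positive degree that persists to $X_t$, which is item 3 of Theorem \ref{thm:B}.
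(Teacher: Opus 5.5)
Your parts i) and ii), and your proof that $H^0(X_t,\mathcal L_t)=0$, follow the paper's argument. Where the paper only sketches that step, you make it precise. The paper infers the obstruction from $H^1(X_0,\OO_{X_0}(Y_0))\neq 0$ alone. You instead identify the first-order obstruction to lifting $s_0$ with the image of the Kodaira--Spencer class under $\vT^1_{X_0}\to H^1(Y_0,N_{Y_0/X_0})$. You then use surjectivity of this map (from (\ref{eq:DefPairToDef}) and $\vT^2_{X_0}(-\log Y_0)=0$) to see that a generic smoothing direction has nonzero obstruction. You are also right that ``no effective surfaces'' requires $H^0(X_t,\mathcal L_t^{m})=0$ for every $m\ge 1$. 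The paper's proof skips this and concludes directly from ``$\mathcal L_t$ generates $\Pic(X_t)$ and has no sections''.

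There is a genuine gap in your treatment of $m\ge 2$.

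\textbf{The fallback is vacuous.} Take a very free curve $C$ as in Proposition \ref{prop:avoidance_persistence}. The persisting curves satisfy $\mathcal L_t\cdot C_t=Y\cdot C=\deg C>0$. Intersecting with them therefore only gives $m\ge 0$ and says nothing against $m\ge 2$.

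\textbf{The primary route needs two inputs you have not supplied.}
\begin{itemize}
\item[(a)] You need $h^0(\tilde X,m\tilde Y)=1$. By the sequences $0\to\OO_{\tilde X}((k-1)\tilde Y)\to\OO_{\tilde X}(k\tilde Y)\to\OO_{\tilde Y}(-kK_{\tilde Y})\to 0$, this requires $H^0(\tilde Y,-kK_{\tilde Y})=0$ for $1\le k\le m$. That does not follow ``as in part (2)''. There one imposes $e$ simple point conditions on $H^0(Y,-K_Y)$. For $k\ge 2$ one imposes multiplicity $k$ at the $9-d+e$ points of $\PP^2$ whose blow-up is $\tilde Y$. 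This is a Nagata/SHGH-type problem, where independence of conditions cannot simply be assumed.
\item[(b)] Non-lifting of $s_0^m$ is not ``the same argument''. Its first-order obstruction is $m\,s_0^{m-1}\cdot\mathrm{ob}(s_0)\in H^1(X_0,\mathcal L_0^{m})$. This is nonzero only if multiplication by $s_0^{m-1}$ is injective on $H^1(X_0,\mathcal L_0)$. The kernel of that multiplication is the image of $H^0(\mathcal L_0^{m}|_{(m-1)Y_0})$, which is filtered by $H^0(\tilde Y,-kK_{\tilde Y})$ for $2\le k\le m$.
\end{itemize}
So everything hinges on $H^0(\tilde Y,-kK_{\tilde Y})=0$ for all $k\ge 1$.

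\textbf{A possible repair.} Under the hypothesis $e\ge d+2$, which iii) uses anyway through ii), this vanishing can be obtained by specialization.
\begin{itemize}
\item[1.] Move $\xi$ inside $Y$ so that the $9-d$ points defining $Y$ as a blow-up of $\PP^2$, together with $d$ points of $\xi$, are the base points of a cubic pencil.
\item[2.] Arrange that the remaining $e-d\ge 2$ points lie on distinct smooth fibres of the resulting elliptic fibration.
\item[3.] A section of $-kK_{\tilde Y}$ is then a degree-$k$ form on $\PP^1$ vanishing to order $k$ at two distinct points, hence zero.
\item[4.] Semicontinuity gives the vanishing for general $\xi$, for each $k$.
\end{itemize}
Since semicontinuity is applied separately for each $k$ and $m$, the conclusion ``no effective surfaces'' holds for very general $\xi$ and $t$.
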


\begin{proof}
The Picard group of $\tilde{X}$ is generated by $H = \sigma^*\OO_X(1)$ and the exceptional divisors $E_1, \dots, E_e$. Let $D = aH - \sum_{i=1}^e b_i E_i \in \Pic(\tilde{X})$. For $D$ to descend to a Cartier divisor on $X_0$, it must intersect all contracted curves at degree zero. 

Because the original curve for $\tilde{C}$ has degree $e$ and passes through all $p_i$, we have $H \cdot \tilde{C} = e$ and $E_i \cdot \tilde{C} = 1$ for all $i$. Thus:
\[
D \cdot \tilde{C} = ae - \sum_{i=1}^e b_i = 0.
\]
Because the original curve for $\tilde{C}_k$ has degree $e-1$ and passes through all $p_i$ except $p_k$, we have $H \cdot \tilde{C}_k = e-1$, $E_k \cdot \tilde{C}_k = 0$, and $E_i \cdot \tilde{C}_k = 1$ for $i \neq k$. Thus:
\[
D \cdot \tilde{C}_k = a(e-1) - \sum_{i \neq k} b_i = 0.
\]
Let $S = \sum_{i=1}^e b_i$. The first equation gives $S = ae$. The second equation gives $\sum_{i \neq k} b_i = S - b_k = a(e-1)$. Substituting $S = ae$ yields:
\[
ae - b_k = ae - a \implies b_k = a.
\]
Since this holds for every $k \in \{1, \dots, e\}$, any Cartier divisor on $X_0$ must pull back to a divisor of the form $D = aH - a \sum_{i=1}^e E_i = a\tilde{Y}$. Therefore, $\Pic(X_0)$ is generated by $Y_0 = \pi_*\tilde{Y}$, and its rank is $1$.

\vspace{0.5em}
We evaluate the normal bundle $N_{Y_0/X_0}$. Because $\tilde{Y}$ avoids the exceptional locus of $\pi$, $Y_0$ lies in the smooth locus of $X_0$ and $N_{Y_0/X_0} \cong N_{\tilde{Y}/\tilde{X}}$. By adjunction on $\tilde{X}$, since $K_{\tilde{X}} + 2\tilde{Y} \sim 0$, we have $N_{\tilde{Y}/\tilde{X}} \cong \OO_{\tilde{Y}}(\tilde{Y}) \cong \OO_{\tilde{Y}}(-K_{\tilde{Y}})$.

The surface $\tilde{Y}$ is the blow-up of a degree $d$ del Pezzo surface at $e$ points. For $e > d$, $-K_{\tilde{Y}}$ is not effective, so $H^0(Y_0, N_{Y_0/X_0}) = H^0(\tilde{Y}, -K_{\tilde{Y}}) = 0$. This forces $h^0(X_0, \OO_{X_0}(Y_0)) = 1$ (only the trivial section exists). 
By the Riemann-Roch theorem on the surface $\tilde{Y}$:
\[
\chi(\tilde{Y}, -K_{\tilde{Y}}) = \frac{1}{2}(-K_{\tilde{Y}} \cdot (-2K_{\tilde{Y}})) + \chi(\OO_{\tilde{Y}}) = K_{\tilde{Y}}^2 + 1 = d - e + 1.
\]
Since $h^0(-K_{\tilde{Y}}) = 0$ and $h^2(-K_{\tilde{Y}}) = h^0(2K_{\tilde{Y}}) = 0$, we conclude that $h^1(Y_0, N_{Y_0/X_0}) = e - d - 1$. If $e > d + 1$, then $H^1(Y_0, N_{Y_0/X_0}) \neq 0$. 
From the standard short exact sequence $0 \to \OO_{X_0} \to \OO_{X_0}(Y_0) \to N_{Y_0/X_0} \to 0$ and the vanishing of $H^1(\OO_{X_0})$, we obtain an isomorphism $H^1(X_0, \OO_{X_0}(Y_0)) \cong H^1(Y_0, N_{Y_0/X_0}) \neq 0$. This non-trivial cohomology group governs the obstructions to extending the section of $Y_0$ to first-order deformations of $X_0$, see Subsection \ref{ss:setting}.

\vspace{0.5em}
Because $H^i(\OO_{X_0}) = 0$ for $i > 0$, the line bundle $\OO_{X_0}(Y_0)$ lifts unobstructed to a line bundle $\mathcal{L}_t \in \Pic(X_t)$ on any small global smoothing. Since $\Pic(X_0)$ has rank $1$, $\Pic(X_t)$ is generated by $\mathcal{L}_t$.
 Since $H^1(X_0, \OO_{X_0}(Y_0)) \neq 0$, the unique global section of $Y_0$ is obstructed from deforming along a generic moduli direction of $X_0$ (i.e., one that does not preserve the log pair structure). By semicontinuity, for a general $t$, the dimension of the space of global sections drops from $1$ to $0$, meaning $H^0(X_t, \mathcal{L}_t) = 0$. Because $\mathcal{L}_t$ generates the Picard group and has no sections, the threefold $X_t$ possesses no effective surfaces.
\end{proof}

\begin{ex}
  We can take $X$ to be a cubic threefold and take $e=1,2$, this is, we consider lines and conics passing through generic points $\xi_1,\xi_2$. Using residuation, we can show that the hypothesis of Proposition \ref{prop:MultDegrees} are satisfied.
\end{ex}
Finally, we want to show that general very free curves avoid the conifold locus and persist.
\begin{prop}
\label{prop:avoidance_persistence}
Let $X$ be a smooth prime Fano threefold of index $2$, let $Y \in |-\frac{1}{2}K_X|$ be a generic smooth divisor, let
$\sigma:\tilde X=Bl_{\xi}X\to X$ be the blow-up at a very general configuration
$\xi\subset Y$, and let
\[
\pi:\tilde X\to X_0
\]
be the contraction of the finite union
\[
\Gamma=C_1\cup\cdots\cup C_N\subset \tilde X
\]
of strict disjoint transforms of the anchored curves through $\xi$.

Let $\mathcal H_0$ be an irreducible component of $\overline M_{0,0}(X,e')$
whose general point parametrizes a smooth embedded very free rational curve
$C\subset X$. Then, for a general curve $C\in \mathcal H_0$, the following hold:
\begin{enumerate}
\item $C$ is disjoint from $\xi$ and from the finite union
\[
B:=\sigma(C_1)\cup\cdots\cup\sigma(C_N)\subset X;
\]
\item the strict transform $\tilde C\subset \tilde X$ is disjoint from $\Gamma$;
\item if $C_0:=\pi(\tilde C)\subset X_0$ is the corresponding curve on the singular
fiber, then $C_0\simeq C$ and
\[
N_{C_0/X_0}\cong N_{C/X};
\]
in particular,
\[
H^1(C_0,N_{C_0/X_0})=0;
\]
\item for any flat smoothing $f:\mathcal X \to \Delta$ with central fiber $X_0=f^{-1}(0)$, the curve $C_0$ deforms to nearby smooth fibers.
\end{enumerate}
\end{prop}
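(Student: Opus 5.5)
The plan is to prove the four items in order, with a dimension count doing the real work in item 1 and the rest following formally.

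\textbf{Item 1.} Since $\mathcal H_0$ parametrizes very free curves, the evaluation map from the universal curve over $\mathcal H_0$ to $X$ is dominant. Moreover, for any point $p\in X$, the subfamily of curves through $p$ has codimension $2$ in $\mathcal H_0$: very freeness gives $N_{C/X}(-p)$ globally generated with $H^1=0$, so passing through a point imposes exactly $\dim X-1=2$ conditions. Hence the curves meeting the finite set $\xi$ form a proper closed subset, and a general $C$ avoids $\xi$.

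For the curve $B$, one needs the locus of curves meeting a fixed curve to have codimension $1$. I would show this by considering the incidence $\{(C,b): b\in C\cap B\}$, which has dimension $(\dim\mathcal H_0-2)+1$. This is where very freeness is essential: it gives the codimension-$2$ statement at \emph{every} point of $B$, and not only at a general point of $X$. That uniformity is also why I would take $\xi$, and hence $B$, very general, so that $B$ contains no point of the countably many bad loci.

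\textbf{Main obstacle.} I expect the main difficulty to be checking that the "curves through $p$" locus has the expected codimension uniformly along $B$. If a direct argument fails, I would fall back on a standard trick: translate $B$ by the deformations of $\xi$. Since $\xi$ is very general, the curves $\sigma(C_i)$ sweep out a family whose union is dense. Then a fixed general $C$ meets the curve $B_\xi$ only for $\xi$ in a proper subvariety of configurations. Exchanging the roles of $C$ and $\xi$ in this way is the step needing care.

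\textbf{Item 2.} Since $C$ misses $\xi$, the map $\sigma$ is an isomorphism near $C$, so $\tilde C=\sigma^{-1}(C)$. Each component $C_i$ of $\Gamma$ maps into $B$, and $C\cap B=\emptyset$, so $\tilde C\cap\Gamma=\emptyset$.

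\textbf{Item 3.} The contraction $\pi$ is an isomorphism off $\Gamma$. Combined with item 2, this shows that a neighborhood of $\tilde C$ maps isomorphically onto a neighborhood of $C_0$ inside the smooth locus of $X_0$. Composing with the isomorphism $\sigma$ near $C$ gives $C_0\cong C$ and $N_{C_0/X_0}\cong N_{\tilde C/\tilde X}\cong N_{C/X}$. Since $C$ is very free, $N_{C/X}=\bigoplus\OO(a_i)$ with all $a_i\ge1$, and therefore $H^1(C_0,N_{C_0/X_0})=0$.

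\textbf{Item 4.} The curve $C_0$ lies in the smooth locus of $X_0$, and $\mathcal X$ is smooth along $C_0$, which is where we work; in fact $\mathcal X$ is smooth wherever $X_0$ is, by flatness. This gives an exact sequence
$$0\to N_{C_0/X_0}\to N_{C_0/\mathcal X}\to \OO_{C_0}\to0.$$
The obstruction space for deforming $C_0$ relative to $\Delta$ is $H^1(C_0,N_{C_0/X_0})=0$. So the relative Hilbert scheme (or relative Kontsevich space) of $\mathcal X/\Delta$ is smooth at $[C_0]$ of relative dimension $h^0(N_{C_0/X_0})$, and its projection to $\Delta$ is smooth, hence surjective near $[C_0]$. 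Therefore $C_0$ deforms to $X_t$ for small $t$. This last step is routine; for the unobstructedness argument I would cite Kollár's theorem on relative Hom schemes.
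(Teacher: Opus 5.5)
Your proposal is correct and follows essentially the paper's route: a dimension count for item 1, then the local isomorphisms $\sigma$ and $\pi$ near $C$ for items 2 and 3, and for item 4 the sequence $0\to N_{C_0/X_0}\to N_{C_0/\mathcal X}\to \OO_{C_0}\to 0$ together with $H^1(C_0,N_{C_0/X_0})=0$. The uniformity you flag as the main obstacle is not a real one: restrict to the open set $\mathcal H_0^\circ$ of free curves, where $\ev\colon\mathcal U^\circ\to X$ is smooth at every point because $N_{C/X}$ is globally generated. Then $\ev^{-1}(\xi)$ and $\ev^{-1}(B)$ have codimension $3$ and $2$ wherever $B$ sits, so your direct argument already works and neither the very-general hypothesis on $\xi$ nor your fallback is needed.
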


\begin{proof}
Let $\mathcal H_0^\circ\subset \mathcal H_0$ be a dense open subset over which the
universal curve
\[
p:\mathcal U^\circ\to \mathcal H_0^\circ
\]
is smooth and the evaluation morphism
\[
\ev:\mathcal U^\circ\to X
\]
is smooth. Such an open subset exists because the general curve in
$\mathcal H_0$ is very free.

We first prove that a general curve in $\mathcal H_0^\circ$ avoids $\xi$.
Since $\xi$ is finite, each point $x\in \xi$ has codimension $3$ in the
threefold $X$. Because $\ev$ is smooth on $\mathcal U^\circ$, the inverse image
$\ev^{-1}(x)$ has codimension $3$ in $\mathcal U^\circ$. Hence
\[
\dim \ev^{-1}(x)\le \dim \mathcal U^\circ-3
= (\dim \mathcal H_0^\circ+1)-3
= \dim \mathcal H_0^\circ-2.
\]
Therefore $\ev^{-1}(x)\to \mathcal H_0^\circ$ is not dominant. Since $\xi$ is
finite, the union $\bigcup_{x\in \xi}\ev^{-1}(x)$ is a proper closed subset of
$\mathcal H_0^\circ$. Thus a general curve in $\mathcal H_0$ does not meet
$\xi$.

Now let
\[
B=\sigma(C_1)\cup\cdots\cup\sigma(C_N)\subset X.
\]
Each $B_i=\sigma(C_i)$ is a smooth rational curve in $X$, hence a
codimension-$2$ closed subset of the threefold $X$. Since $\ev$ is smooth on
$\mathcal U^\circ$, each inverse image $\ev^{-1}(B_i)$ has codimension $2$ in
$\mathcal U^\circ$. Thus
\[
\dim \ev^{-1}(B_i)\le \dim \mathcal U^\circ-2
= (\dim \mathcal H_0^\circ+1)-2
= \dim \mathcal H_0^\circ-1,
\]
so $\ev^{-1}(B_i)\to \mathcal H_0^\circ$ is not dominant. Because there are
only finitely many $B_i$, the union of the corresponding loci is still a proper
closed subset of $\mathcal H_0^\circ$. Hence a general curve in $\mathcal H_0$
is disjoint from $B$.

For such a general curve $C$, the blow-up $\sigma:\tilde X\to X$ is an
isomorphism along $C$ because $C\cap \xi=\emptyset$. Therefore its strict
transform $\tilde C$ is canonically isomorphic to $C$. Moreover, since
$C\cap B=\emptyset$, the curve $\tilde C$ is disjoint from $\Gamma$, because
$C_i$ maps isomorphically to $B_i$ away from the blown-up points. Hence
$\pi$ is an isomorphism in a neighborhood of $\tilde C$, and the image
\[
C_0:=\pi(\tilde C)\subset X_0
\]
is a smooth rational curve with
\[
N_{C_0/X_0}\cong N_{\tilde C/\tilde X}\cong N_{C/X}.
\]

Since the general curve in $\mathcal H_0$ is very free, its normal bundle is
ample, so in particular
\[
H^1(C,N_{C/X})=0.
\]
Therefore
\[
H^1(C_0,N_{C_0/X_0})=0.
\]
Now apply the standard normal bundle sequence for the inclusion
$C_0\subset X_0\subset \mathcal X$ in any flat smoothing $f: \mathcal X\to\Delta$. Because $X_0 = f^{-1}(0)$ is a principal divisor (the central fiber), its normal bundle $N_{X_0/\mathcal X}$ is trivial. Restricting this to $C_0$ yields the sequence:
\[
0\to N_{C_0/X_0}\to N_{C_0/\mathcal X}\to \OO_{C_0}\to 0.
\]
Since $C_0 \simeq \mathbb{P}^1$, $H^1(C_0, \OO_{C_0}) = 0$. Thus, the vanishing of $H^1(C_0,N_{C_0/X_0})$ implies $H^1(C_0,N_{C_0/\mathcal X})=0$,
so the relative Hilbert scheme is smooth at $[C_0]$. The induced map to the base
has surjective differential because
\[
H^0(C_0,N_{C_0/\mathcal X})\twoheadrightarrow H^0(C_0,\OO_{C_0})
\cong T_0\Delta.
\]
Hence the component through $[C_0]$ dominates $\Delta$, and $C_0$ deforms to
curves in nearby smooth fibers.
\end{proof}

\section{Hodge theoretic properties}
\label{s:hodge}

In this section, we study the Hodge theory of the smoothed general fiber $X_t$. The main tool is the limiting mixed Hodge structure (LMHS) on the SNC model of the central fiber $X_0$ and the recent results of Chen \cite{Che24} on global smoothings of normal crossing varieties.

Recall briefly Chen's theorem. Let $f \colon \mathcal{X} \to \Delta$ be a one-parameter smoothing of a threefold $X_0$ with ordinary double points. To compute the Hodge structures, one passes to the simple normal crossing (SNC) resolution $Y = \tilde{X}_0 \cup E_1 \cup \dots \cup E_k/ \sim$, where $\tilde{X}_0$ is the blow-up of $X_0$ at the nodes (equivalently at the curves $C_i\subset \tilde X$), the $E_i$ are smooth 3-dimensional quadrics and $\sim$ denotes the equivalence relation obtained by gluing $E_i$ to $\tilde X_0$ along the exceptional quadrics. Chen proves in \cite[Theorem 4.1]{Che24} that if the components of $Y$ are K\"ahler, then the general fiber $X_t$ satisfies the $\partial\bar{\partial}$-lemma. 

Because our construction starts with a Fano threefold $X$, the blow-up $\tilde{X}_0$ is projective and hence K\"ahler. Furthermore, by Corollary \ref{cor:independent_smoothings}, the nodes can be smoothed independently without global homological obstructions. As an immediate corollary, we deduce:

\begin{cor}
The general smoothed fiber $X_t$ satisfies the $\partial\bar{\partial}$-lemma and inherits a polarized pure Hodge structure of weight 3.
\end{cor}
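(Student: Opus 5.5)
The plan is to apply Chen's theorem \cite[Theorem 4.1]{Che24} directly and then extract the Hodge-theoretic consequences from the $\partial\bar\partial$-lemma.

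\textbf{Step 1: set up an SNC model and check the Kähler hypothesis.} Take a one-parameter smoothing $f\colon\mathcal X\to\Delta$ of $X_0$. Such smoothings exist by Corollary \ref{cor:independent_smoothings}, and near each node the total space is smooth with local equation $xy-zw=t$. Blowing up the nodes in $\mathcal X$ gives a semistable model. Its central fiber is $\tilde X_0\cup\bigcup_i E_i$, where:
\begin{itemize}
\item each $E_i$ is a smooth quadric threefold, hence projective;
\item $\tilde X_0=\mathrm{Bl}_\Gamma\tilde X$, the blow-up of the projective variety $\tilde X=\mathrm{Bl}_\xi X$ along the smooth curves $C_i$, is again projective and therefore Kähler;
\item the double locus consists of the quadric surfaces $E_i\cap\tilde X_0\cong\PP^1\times\PP^1$, which are Kähler.
\end{itemize}
Chen's theorem then gives that the nearby fiber $X_t$ satisfies the $\partial\bar\partial$-lemma. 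I would also check the other hypotheses of \cite{Che24}, for instance reducedness of the central fiber and the $d$-semistability that is automatic for a smoothing of ODPs. If Chen needs the smoothing to be a generic direction, I would appeal to the surjectivity $\vT^1_{X_0}\twoheadrightarrow H^0(T^1_{X_0})$ from Proposition \ref{prop:defAbs}.

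\textbf{Step 2: the pure Hodge structure.} The $\partial\bar\partial$-lemma gives degeneration of the Frölicher spectral sequence at $E_1$. It also shows that the decomposition $H^k(X_t,\C)=\bigoplus_{p+q=k}H^{p,q}$, defined via Bott–Chern or Dolbeault cohomology, is well defined and satisfies $\overline{H^{p,q}}=H^{q,p}$. Hence $H^3(X_t)$ carries a pure Hodge structure of weight $3$.

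\textbf{Step 3: the polarization.} This is the step I expect to be the main obstacle, because $X_t$ need not be Kähler (compare the proposition on $X_S$ above), so hard Lefschetz is unavailable. On the middle-degree group $H^3$, however, no Kähler class is needed: take the cup-product pairing $Q(\alpha,\beta)=\int_{X_t}\alpha\wedge\beta$. Polarization means the Hodge–Riemann positivity $i^{p-q}Q(\alpha,\bar\alpha)>0$ for $\alpha\in H^{p,q}$ (with the appropriate sign convention for weight $3$). My plan is to prove this degeneration-theoretically:
\begin{itemize}
\item Compute the limiting mixed Hodge structure of the semistable model by the Clemens–Schmid/Steenbrink spectral sequence. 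Its weight-$3$ graded piece is $H^3(\tilde X_0)\oplus\bigoplus_i H^3(E_i)$, corrected by the double locus. Since $H^3(E_i)=0$ and $H^3$ of the double locus vanishes, this piece comes down to $H^3(\tilde X_0)\cong H^3(X)$, which is polarized by projectivity.
\item The weight-$2$ and weight-$4$ pieces come from the vanishing cycles $S^3$. They are of Tate type and are polarized via the monodromy $N$, by Schmid/Cattani–Kaplan–Schmid type results for the LMHS.
\item Pass from the limit to $X_t$. The Hodge numbers of $X_t$ agree with those of the limit, by the $\partial\bar\partial$-lemma and Frölicher degeneration, and the sign of $Q$ on each Hodge piece is constant in $t$ by continuity of the nondegenerate form $Q$ on the Hodge bundles. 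Together these give positivity on $X_t$ for small $t$.
\end{itemize}
If the matching of signs fails for the Tate part, the fallback is to quote the corresponding statements in \cite{F19,Li24}, which treat exactly this Hodge theory of Clemens–Friedman-type smoothings.
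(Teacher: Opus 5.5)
Your Step 1 is the paper's entire argument. The paper only observes that the components of the SNC model are Kähler ($\tilde X_0$ is projective, the $E_i$ are quadrics) and then invokes \cite[Theorem 4.1]{Che24}. It gives no separate proof of the polarization and attributes it to \cite{Che24} (see also \cite{F19,Li24}), which is your fallback.

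One correction to Step 1. Blowing up a node of the smooth total space $xy-zw=t$ produces an exceptional $\PP^3$ with multiplicity $2$ in the central fiber, so the result is not semistable. You first need the base change $t=s^2$. Blowing up the singular point of $xy-zw=s^2$ then gives the reduced SNC fiber $\tilde X_0\cup\bigcup_i E_i$, with the $E_i$ quadric threefolds. The nearby fibers are unchanged, so Chen's theorem still applies to $X_t$.

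Step 3, where you go beyond the paper, has a real gap in its last bullet.
\begin{itemize}
\item What your continuity argument does show: the Hermitian form $h_t(\alpha)=i^{p-q}Q(\alpha,\bar\alpha)$ is nondegenerate on each $H^{p,q}_t$, so its signature is constant on the punctured disk.
\item What it does not show: nothing fixes that signature at any point. At $t=0$ there is no pure Hodge structure with a definite form to compare against. Only the untwisted filtration $\exp(-\tfrac{\log t}{2\pi i}N)F_t$ converges, and untwisting does not commute with complex conjugation, so $F^p_t\cap\overline{F^{3-p}_t}$ has no good limit.
\item Concretely, $F^2_t$ converges in the Grassmannian to a subspace containing $W_{2,\C}=\operatorname{Im}N$, which is real and $Q$-isotropic because $Q(Na,Nb)=-Q(a,N^2b)=0$. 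So the Hodge--Riemann form becomes degenerate in the limit, and a form that is only degenerate in the limit need not be definite nearby.
\end{itemize}

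The missing ingredient is the converse nilpotent-orbit estimate.
\begin{itemize}
\item Your first two bullets are fine and show that $(W(N),F_\infty)$ is a polarized mixed Hodge structure: $Gr^W_3\cong H^3(X)$, and $Q(\cdot,N\cdot)$ is definite on $Gr^W_4$ by Picard--Lefschetz.
\item By Cattani--Kaplan--Schmid, $\exp(zN)F_\infty$ then lies in the period domain for $\operatorname{Im}z>0$.
\item Write $F_t=\exp(zN)\tilde F(t)$ with $z=\tfrac{\log t}{2\pi i}$ and $\tilde F(t)=F_\infty+O(|t|)$. Conjugating by $\exp(zN)$ and by $(\operatorname{Im}z)^{H/2}$ from the $\mathrm{SL}_2$-splitting distorts the correction only by a power of $|\log|t||$. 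It therefore stays $O(|t|\,|\log|t||^{c})\to 0$.
\item By openness, $F_t$ is polarized for small $|t|$. Your constancy argument then extends this to the whole punctured disk on which the $\partial\bar\partial$-lemma holds.
\end{itemize}

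Be careful about where the holomorphic extension $\tilde F(t)$ comes from. It must come from Steenbrink's relative logarithmic de Rham complex, which is available because the components are Kähler. It cannot come from Schmid's nilpotent orbit theorem, which presupposes a polarized variation and would make the argument circular.
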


\begin{ex}
Let $X$ be a smooth cubic threefold. Its intermediate Jacobian $J(X)$ is a 5-dimensional principally polarized abelian variety. Suppose we smooth $k$ nodes with $r$ relations to obtain $X_t$. The third Betti number is $b_3(X_t) = 10 + 2(k-r)$.

We can ask if $X_t$ could simply be isomorphic to a space $\tilde{Z}_t$, obtained by blowing up a curve $C_t$ in a twistor space $Z \to M$. As $X$ is simply connected, we can assume the base $M$ is any simply connected Riemannian 4-manifold (for instance, $S^4$ or $n\C P^2$). Because $M$ is simply connected, $H^1(M, \Z) = 0$. By Poincar\'e duality on the 4-manifold, $H^3(M, \Z) = 0$ as well. The Leray spectral sequence for the $S^2$-fibration then forces $H^3(Z, \C) = 0$.

If we blow up a smooth curve $C_t$ of genus $g = 5 + (k-r)$ inside $Z$, the blow-up formula gives $b_3(\tilde{Z}_t) = 2g = 10 + 2(k-r)$. Thus, $X_t$ and $\tilde{Z}_t$ are simply connected and have the exact same third Betti number. Topologically, $H^3$ cannot tell them apart.

However, they fail to match algebraically when we take the limit as $t \to 0$. If $X_t \cong \tilde{Z}_t$, the intermediate Jacobian of the blown-up twistor space is the Jacobian of the curve: $J(X_t) \cong \text{Jac}(C_t)$. 

On the threefold side, by the LMHS description (see for example \cite{F19}), the compact abelian part of the limiting intermediate Jacobian $J_{\text{lim}}(X_t)$ is constructed from $Gr^W_3 H^3_{\text{lim}}$. This is exactly $J(X)$, the intermediate Jacobian of the smooth cubic threefold.

On the twistor side, consider the family of curves $C_t$. By the semi-stable reduction theorem, after possibly a finite base change $t \mapsto t^d$, the family $C_t$ can be completed over $t=0$ to a nodal stable curve $C_0$. This base change does not affect the weight filtration of the LMHS, so the compact abelian part of the limit remains unchanged. The limit of the Jacobians $\text{Jac}(C_t)$ is the generalized Jacobian of the stable curve $C_0$. The compact abelian part of this generalized Jacobian is a product of Jacobians of smooth curves, $\prod_i \text{Jac}(\tilde{C}_{0, i})$, where the $\tilde{C}_{0, i}$ are the smooth normalizations of the irreducible components of $C_0$. 

Comparing the compact abelian parts of both limits, we would be forced to have $J(X) \cong \prod_i \text{Jac}(\tilde{C}_{0, i})$. But by the Clemens-Griffiths theorem \cite{CG72}, the cubic threefold's intermediate Jacobian $J(X)$ is indecomposable and can not be obtained as a Jacobian of a curve (or a product of them). This is a contradiction. This proves that our smoothings cannot be obtained as blow-ups on curves of any simply connected twistor space.
\end{ex}

\bibliographystyle{alpha}
\bibliography{sample}

\end{document}